\begin{document}

\title{On the kappa ring of $\Modbar_{g,n}$}%
\author{Eaman Eftekhary}%
\address{School of Mathematics, Institute for Research in Fundamental Science (IPM),
P. O. Box 19395-5746, Tehran, Iran}%
\email{eaman@ipm.ir}
\author{Iman Setayesh}%
\address{School of Mathematics, Institute for Research in Fundamental Science (IPM),
P. O. Box 19395-5746, Tehran, Iran}%
\email{setayesh@ipm.ir}

\begin{abstract}
Let $\kappa_e(\Mgnbar)$ denote the kappa ring of $\Mgnbar$ in codimension $e$
(equivalently, in degree $d=3g-3+n-e$). For $g,e\geq 0$ fixed, as the number $n$ of the 
markings grows large we show that the 
rank of $\kappa_e(\Mgnbar)$ is asymptotic to
\begin{displaymath}
\frac{{n+e\choose e}{g+e\choose e}}{(e+1)!}\simeq \frac{{g+e\choose e}n^e}{e!(e+1)!}.
\end{displaymath} 
When $g\leq 2$ we show that a kappa class $\kappa\in\kring$ is trivial if and only if 
the integral of $\kappa$ against all  boundary strata is trivial.
For $g=1$ we further show that
the rank of  $\kappa_{n-d}(\Mbar_{1,n})$ 
is equal to $|\PP_1(d,n-d)|$, where 
$\PP_i(d,k)$ denotes the set of partitions $\p=(p_1,...,p_\ell)$ of $d$ 
such that at most $k$ of the numbers $p_1,...,p_\ell$ are greater than $i$.
\end{abstract}
\maketitle
\section{Introduction}\label{sec:intro}
Let $\epsilon : \overline{\mathcal{M}}_{g,n+1} \to 
\overline{\mathcal{M}}_{g,n}$ denote the universal curve over the moduli  space $\Mgnbar$ 
of stable genus $g$,  $n$-pointed curves.  
Let $\mathbb{L}_{i} \to \overline{\mathcal{M}}_{g,n+1}$ 
denote the cotangent line bundle
 over $\Modbar_{g,n+1}$ with fiber over a  point equal to the cotangent line at
 the $i^{th}$ marking. Define
$$\psi_i= c_1(\mathbb{L}_{i})\in A^{1}(\overline{\mathcal{M}}_{g,n+1})\ \ 
\text{and}\ \ \kappa_i = \epsilon_{*}(\psi_{n+1}^{i+1}) 
\in A^{i}(\overline{\mathcal{M}}_{g,n}).$$
The push forwards of the $\kappa$ and $\psi$ classes from the boundary 
strata generate the 
tautological ring $R^*(\overline{\mathcal{M}}_{g,n})$ \cite{F,GP}.
The kappa ring $\kring$  is  the subring of $R^*(\Mgnbar)$ generated by 
$\kappa_1,\kappa_2,...$ over $\Q$. Let $\kappa^d(\Mgnbar)$ denote the 
$\Q$-module generated by the kappa classes of degree $d$ and set 
$\kappa_e(\Mgnbar)=\kappa^{3g-3+n-e}(\Mgnbar)$.\\

Applying the localization formula of \cite{Rahul-localization} to the action of 
$\C^*$ on the moduli space of stable maps from curves of genus $g$ to $\Pp^1$ 
we prove the following theorem.

\begin{theorem}\label{thm:main2}
Fix the genus $g$ and the codimension $e$. As the number $n$ of the marked points 
grows large, the rank of $\kappa_e(\Mgnbar)$ is asymptotic to 
\begin{displaymath}
\frac{{g+e\choose e}{n+e\choose e}}{(e+1)!}\simeq \frac{{g+e\choose e}n^e}{e!(e+1)!}.
\end{displaymath} 
\end{theorem}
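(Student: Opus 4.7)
The approach suggested is the Graber--Pandharipande virtual localization formula, applied to the standard $\C^*$-action on the moduli space $\Modbar_{g,n}(\Pp^1,d)$ of stable maps of degree $d$. Any $\C^*$-equivariant integral against the virtual fundamental class is independent of the equivariant parameter, so computing such an integral by localization and matching coefficients produces identities among tautological classes on $\Modbar_{g,n}$. By choosing the integrand to consist of powers of $\psi$ at the markings, kappa classes on $\Modbar_{g,n}$ pulled back along the stabilization morphism, and evaluation pullbacks of the hyperplane class of $\Pp^1$, one can arrange that only kappa classes appear after the boundary contributions are unraveled via the standard comparisons between $\psi$ on the map moduli and $\psi$ on $\Modbar_{g,n}$.

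For the upper bound, the plan is to vary $d$, the $\psi$-insertions, and the evaluation classes to produce a large family of relations in $\kring$ in codimension $e$. The partition-indexed monomials $\kappa_{a_1}\cdots\kappa_{a_\ell}$ with $\sum a_i=e$ give a natural set of classes, but for $n$ large they need not be independent, and combinations with extra data of how the $\psi$-insertions are distributed among the $n$ markings enter through the stabilization. I would expect the localization relations to reduce this collection to a spanning set parametrized by pairs consisting of a partition $\mu$ of part of $e$ (contributing the $\binom{g+e}{e}/(e+1)!$ factor, matching the kappa ring of the open stratum $\Mod_g$) and a weak composition distributing the remainder of $e$ among the $n$ markings (contributing the $\binom{n+e}{e}$ factor). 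Summing gives the predicted asymptotic $\binom{g+e}{e}\binom{n+e}{e}/(e+1)!$.

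For the lower bound, one constructs enough independent pairings to show the spanning set is asymptotically a basis. Natural candidates are pushforwards from gluing morphisms $\Modbar_{0,k+1}\times\Modbar_{g,n-k+1}\to\Modbar_{g,n}$ along rational tails, under which kappa classes behave transparently and the $n$-dependence is isolated in the genus-zero factor, whose kappa ring is classical and explicitly computable. Varying $k$ and the decorations on the two factors should yield a matrix of pairings whose leading-order rank matches the upper bound.

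The main obstacle is matching the two bounds. The virtual localization formula produces relations in a highly nonminimal form, and organizing them into a straightening rule that isolates exactly $\binom{g+e}{e}\binom{n+e}{e}/(e+1)!$ classes in each codimension---rather than a cruder polynomial of degree $e$ in $n$---requires careful combinatorial bookkeeping of the decorated bipartite fixed-locus graphs, and a coordinated choice of integrands across many values of $d$ so that the contributions from vertices of positive genus on the fixed loci cancel or are absorbed into the kappa ring in a controlled way. Equally delicate is verifying that the boundary pairings give a nondegenerate matrix in the leading asymptotic order in $n$, so that the upper and lower bounds agree and do not leave a gap that is itself polynomial in $n$.
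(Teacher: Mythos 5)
Your overall orientation is right---virtual localization on $\Modbar_{g,n+m}(\Pp^1,d)$ with $\psi$- and evaluation-insertions is exactly the source of the relations, and the lower bound does come from pairing against boundary classes---but the proposal stops short of the two ideas that actually close the argument. First, for the upper bound, the decisive mechanism is not a straightening rule on kappa monomials but a genus-reduction induction: after symmetrizing the localization output, the leading term $J^{lead}(\n)$ lies in the span of the bracket classes $\langle\p\rangle$, and a triangularity property of the coefficient matrix $C_{\n}^{\p}$ (inherited from Pandharipande's matrix $M_0(d)$) shows that modulo these leading terms the whole module is generated by bracket classes of partitions of length at most $e-g+1$; all remaining localization contributions are pushforwards from comb-graph strata whose $\infty$-vertex carries strictly smaller genus, so one inducts on $g$ and ultimately lands on a single stratum isomorphic to $\Mbar_{0,n+g+k}$, where Keel's theorem and a count of $S_{n,g,k}$-orbits of codimension-$e$ boundary strata produce the number ${n+e\choose e}{g+e\choose e}{k+e\choose e}/(e+1)!$. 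Your proposed parametrization by ``a partition of part of $e$ times a weak composition'' is not what emerges, and the heuristic that ${g+e\choose e}/(e+1)!$ reflects the kappa ring of the open stratum is a misattribution: the factor ${g+e\choose e}$ counts distributions of the $g$ genus-carrying half-edges among the $e+1$ components of a genus-zero boundary stratum, and the $1/(e+1)!$ comes from the symmetry among those components. Without the inductive descent and the final reduction to genus zero, the localization relations only bound the rank by a degree-$e$ polynomial in $n$ with the wrong leading coefficient.

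Second, the lower bound is not something you can leave to a nondegeneracy hope: the paper does not reprove it here but imports it from the companion paper \cite{ES-k1}, where the independence is established by integrating the classes $\psi(\p)$ against general combinatorial cycles---not just rational-tail strata of the form $\Modbar_{0,k+1}\times\Modbar_{g,n-k+1}$---using the fact that such an integral depends only on the modified weight multi-set of the stratum and vanishes unless $\p$ refines the associated partition, which yields a triangular pairing matrix with nonzero diagonal. Restricting to rational tails as you suggest isolates too little of the relevant combinatorics and is unlikely to separate ${g+e\choose e}{n+e\choose e}/(e+1)!$ classes. As written, your proposal correctly names the obstacle (matching the two bounds) but supplies neither the inductive descent that removes the $\mathfrak{o}(n^e)$ gap on the upper side nor a verified nondegenerate pairing on the lower side.
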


Let $G$ be  a connected graph which is decorated by assigning a genus to each one of 
its vertices and let the markings $1,2,...,n$ get distributed among the vertices of $G$.
For a vertex $v\in V(G)$ let $g_v$ denote the genus associated with $v$, $d_v$ denote 
the degree of $v$, and $n_v$ denote the number of markings assigned to $v$.
If $2g_v+n_v+d_v>2$ for every $v\in V(G)$, $G$ is called 
a {\emph{\SCC}}. Every \SCC
$G$ describes a {\emph{combinatorial cycle}} $[G]$ in $\Mbar_{g,n}$ where 
$$g=|E(G)|-|V(G)|+1+\sum_{v\in V(G)}g_v.$$
The tautological ring of $[G]$ is denoted by $R^*([G])$.
An element $\kappa\in\kring$ is called {\emph{combinatorially}} trivial if for all relevant
 stable weighted graphs $G$ as above $\int_{[G]}\kappa=0$. 
 Let $\kappa_0^*(\Mgnbar)\subset \kring$
denote the set of  combinatorially trivial classes and
 $\kappa_c^*(\Mbar_{g,n})$ denote the quotient 
$\kappa^*(\Mbar_{g,n})/\kappa_0^*(\Mgnbar)$, which sits in the short exact sequence 
\begin{displaymath}
0\lra \kappa_0^*\left(\Mgnbar\right) \lra \kappa^*\left(\Mgnbar\right)
\xrightarrow{\ \pi_{g,n}\ }\kappa_c^*\left(\Mgnbar\right) \lra 0.
\end{displaymath}
 A more careful examination of the localization terms  in our argument proves
the following theorem.
\begin{theorem}\label{thm:main1}
The  map $\pi_{g,n}$ is an isomorphism
 of  graded algebras for $g\leq 2$. 
\end{theorem}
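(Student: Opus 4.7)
The plan is to refine the localization argument that establishes Theorem~\ref{thm:main2}. Since $\pi_{g,n}$ is surjective by construction, the content of the theorem is the injectivity statement: any kappa class whose integral vanishes against every combinatorial cycle $[G]$ must already be zero in $\kring$. Applying $\C^*$-localization (in the form of \cite{Rahul-localization}) to the moduli space of stable maps from genus $g$ curves to $\Pp^1$, one obtains, for each choice of multidegree, a decomposition of a fixed monomial in the $\kappa$-classes as a sum of contributions indexed by \emph{localization graphs} recording how the source curve distributes over the two $\C^*$-fixed points of $\Pp^1$. Every such contribution factors as an integral over a boundary stratum of $\Mgnbar$ of a product of $\psi$- and $\kappa$-classes on smaller moduli factors.

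From these localization relations I would first extract, by taking appropriate linear combinations, a spanning family $\{\kappa^\alpha\}$ of $\kappa_e(\Mgnbar)$ whose cardinality matches the upper bound implicit in the proof of Theorem~\ref{thm:main2}. The second step is to exhibit a parallel family of stable weighted graphs $\{G_\alpha\}$ so that the pairing matrix $M_{\alpha\beta}=\int_{[G_\alpha]}\kappa^\beta$ is triangular, with respect to a natural partial order on the indexing set, and has nonzero diagonal entries. Once such a pair of families is produced, any combinatorially trivial class lies in the kernel of $M$, hence vanishes, giving the injectivity of $\pi_{g,n}$.

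The principal difficulty is establishing the triangularity and non-degeneracy of $M$, which is where the hypothesis $g\leq 2$ intervenes. In genus $0$ this reduces to boundary calculus on $\Mbar_{0,n+1}$ and is essentially classical; in genus $1$ one can induct on $n$ via the forgetful map $\Mbar_{1,n+1}\to\Mbar_{1,n}$ together with the elementary structure of the kappa ring of $\Mbar_{1,1}$. The genus $2$ case would lean on Mumford's description of the tautological ring of $\Mbar_2$ and, in particular, on his relation rewriting $\kappa_2$ in terms of $\kappa_1^2$ and boundary classes, so that the pairing can be analyzed stratum by stratum. The main technical obstacle I anticipate is the bookkeeping required to verify that the diagonal entries of $M$ never vanish after the various localization contributions are recombined; this is precisely the step that one expects to break for $g\geq 3$, where the kappa ring of $\Mbar_g$ is known to carry relations that are not visible to integration against boundary strata.
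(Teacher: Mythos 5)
Your logical skeleton --- produce a spanning family $\{\kappa^\alpha\}$ of $\kappa_e(\Mgnbar)$ together with an equinumerous family of boundary cycles $\{[G_\alpha]\}$ whose pairing matrix is triangular with nonzero diagonal --- would indeed give injectivity of $\pi_{g,n}$, but you supply no mechanism for producing such families, and the one you point to cannot work. The bound ``implicit in the proof of Theorem~\ref{thm:main2}'' is only \emph{asymptotic} in $n$; an asymptotic match of cardinalities says nothing about injectivity for any fixed $n$, and for $g=2$ no exact count of generators of $\kappa_e(\Mbar_{2,n})$ is ever established (nor is one needed in the paper). Your plan therefore defers the entire content of the theorem to the unproved existence of a perfectly dual pair of families, which is essentially equivalent to the statement being proved. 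The closing claim that for $g\geq 3$ the kappa ring ``is known to carry relations not visible to boundary strata'' is also unsupported --- this is precisely the open Question~\ref{question} of the paper.

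The mechanism actually used is a two-sided application of Keel's theorem rather than a single pairing matrix. First, if $\kappa=\langle\phi\rangle_{g,n}$ is combinatorially trivial, restricting the vanishing to the stratum $H_{g,n}$ (one genus-zero vertex with $g$ self-edges, so $[H_{g,n}]\simeq\Mbar_{0,n+2g}$ up to automorphisms) shows $\langle\phi\rangle_{0,n+2g}$ kills all boundary strata of $\Mbar_{0,n+2g}$; Keel's theorem then forces $\phi$ to be a combination of Pandharipande's relations $j(\mm^-)$, i.e.\ $\kappa$ is a combination of the $J^{lead}(\mm^-)$ (Remark~\ref{remark:rahuls-result}). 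Second, localization writes each $J^{lead}(\mm^-)$ as a push-forward from boundary strata with strictly smaller genus at the distinguished vertex: for $g=1$ a single elliptic-tail stratum, so $\kappa=\imath_*(\psi)$ with $\psi\in R_{d-2}(\Mbar_{0,n+1})$; for $g=2$ the four maps $\jmath^0,\dots,\jmath^3$ together with Mumford's relations on $\Mbar_{2,1}$ to normalize the Hodge factors (Lemma~\ref{lem:genus-2-rep}). Third, pairing these push-forwards against boundary cycles $[\pi(D)]$ of $\Mgnbar$ meeting the image transversely pulls the vanishing back to $\int_D\psi=0$ for all boundary strata $D$ of a genus-zero moduli space, and Keel's theorem is invoked a second time to conclude $\psi=0$, hence $\kappa=0$. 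In genus $2$ the classes $a,b,c,c'$ of Lemma~\ref{lem:genus-2-rep} must be killed one at a time by specially constructed cycles ($G_A$, $\tilde G$, $\overline G$). None of this reduction-to-the-boundary-and-back structure appears in your outline, and your genus-one route (induction on $n$ via forgetful maps and $\kappa^*(\Mbar_{1,1})$) is not how the argument goes through.
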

 Theorem~\ref{thm:main1} is a consequence of Keel's Theorem~\cite{Keel}
when $g=0$ and follows from Petersen's work \cite{Pet}
on the structure of the tautological ring for $g=1$. Our argument in genus one 
is, however, different from Petersen's argument.\\

Let $\PP(d)$ denote the set of partitions of $d$ and 
$\PP_i(d,k)$ denote the set of  $\p=(p_1,...,p_\ell)\in\PP(d)$ 
such that at most $k$ of the numbers $p_1,...,p_\ell$ are greater than $i$.
Combining Theorem~\ref{thm:main1} with combinatorial arguments, the following
theorem is also proved in this paper.

\begin{theorem}
The rank of  $\kappa^d(\Mbar_{1,n})$ is equal to $|\PP_1(d,n-d)|$.
\end{theorem}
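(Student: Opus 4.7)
The plan is to combine Theorem~\ref{thm:main1} (in the case $g=1$) with an explicit analysis of pairings against boundary strata. Since $\pi_{1,n}$ is an isomorphism, $\dim_\Q\kappa^d(\Mbar_{1,n})$ equals the rank of the matrix $M_d$ whose rows are indexed by partitions $\p=(a_1,\dots,a_\ell)\in\PP(d)$ (corresponding to the spanning monomials $\kappa_\p=\kappa_{a_1}\cdots\kappa_{a_\ell}$), whose columns are indexed by stable weighted graphs $G$ of genus $1$ on $n$ markings, and whose $(\p,G)$ entry is $\int_{[G]}\kappa_\p$. The theorem then becomes the purely combinatorial statement $\rank(M_d)=|\PP_1(d,n-d)|$.

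The entries of $M_d$ admit an explicit formula. Writing $\iota_G$ for the clutching map associated to $G$, the identity $\iota_G^*\kappa_a=\sum_{v\in V(G)}\kappa_a^{(v)}$ expands each $\int_{[G]}\kappa_\p$ as a sum, over assignments $\sigma\colon\{1,\dots,\ell\}\to V(G)$, of products
\begin{displaymath}
\prod_{v\in V(G)}\int_{\Mbar_{g_v,n_v+d_v}}\prod_{i\in\sigma^{-1}(v)}\kappa_{a_i}.
\end{displaymath}
On each genus-$0$ factor the integral is a multinomial coefficient, vanishing unless the assigned degrees sum to $n_v+d_v-3$; on the single genus-$1$ factor, when present, one uses the standard kappa-integral evaluations on $\Mbar_{1,m}$.

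To prove $\rank(M_d)=|\PP_1(d,n-d)|$ I would argue in two directions. For the \emph{upper bound}, I would show that any $\p\in\PP(d)$ with more than $n-d$ parts $\geq 2$ has its row lying in the $\Q$-span of the rows indexed by $\PP_1(d,n-d)$. The mechanism is a dimension count: each part $a_i\geq 2$ placed on a rational tail $\Mbar_{0,m}$ consumes at least $a_i+2\geq 4$ markings, so on any stratum too many $\geq 2$-parts cannot be so placed without collapsing a factor. Iterating the forgetful-map identity $\kappa_a=\pi^*\kappa_a+\psi_n^a$ on $\pi\colon\Mbar_{1,n}\to\Mbar_{1,n-1}$ and reducing the resulting $\psi$-class terms into boundary-and-kappa expressions, one writes $\kappa_\p$ as a $\Q$-combination of $\kappa_\q$ with strictly fewer $\geq 2$-parts. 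For the \emph{lower bound}, I would construct for each $\p\in\PP_1(d,n-d)$ a witness comb $G(\p)$ consisting of a genus-$1$ core together with one rational tail for each $\geq 2$-part of $\p$, sized so the tail $\Mbar_{0,t_i+1}$ has $\dim=a_i$ (i.e., $t_i=a_i+2$), the remaining markings going to the core. Triangularity of the square submatrix $(\int_{[G(\q)]}\kappa_\p)_{\p,\q\in\PP_1(d,n-d)}$ with respect to, for example, the refinement order on the $\geq 2$-parts, with nonzero diagonal entries (products of multinomials on the tails times an explicit integral on the core), yields the matching bound.

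The principal obstacle will be the upper bound: one must systematically discharge the excess $\geq 2$-parts using the forgetful-map and boundary relations while keeping each intermediate expression inside $\PP_1(d,n-d)$, and verify that every row outside $\PP_1(d,n-d)$ is killed in this way. The lower bound, by comparison, reduces once the witness combs are correctly chosen to checking the nonvanishing of explicit multinomial coefficients.
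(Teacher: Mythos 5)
Your overall architecture (reduce via Theorem~\ref{thm:main1} to the rank of the pairing matrix between kappa classes and boundary strata, then prove matching upper and lower bounds) is the same as the paper's, but both halves of your rank computation have genuine gaps. For the upper bound, the mechanism you propose does not produce the relations you need. The identity $\kappa_a=\pi^*\kappa_a+\psi_n^a$ compares classes on $\Mbar_{1,n}$ and $\Mbar_{1,n-1}$; expanding $\prod_i(\pi^*\kappa_{a_i}+\psi_n^{a_i})$ leaves you with $\psi$-class cross terms, and ``reducing the resulting $\psi$-class terms into boundary-and-kappa expressions'' is precisely the nontrivial genus-one input that has to come from somewhere. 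In the paper this input is twofold: Pandharipande's compact-type relations (the classes $J^{lead}(\mm^-)$ indexed by $\mm\in\PP(d)\setminus\PP(d,n-d)$, i.e.\ partitions of length more than $n-d$, not partitions with many parts $\geq 2$), and the explicit genus-one $\kappa$-trivial cycle $\frac{1}{24}\q_0(n)-\sum_i\binom{n-2}{i-1}\q_i(n-i)$ from Theorem 3 of \cite{ES-k1}, which is then iterated (Lemma~\ref{lem:k-trivial-2}) to eliminate cycles with a positive-genus component. Your ``dimension count'' heuristic also fails to constrain rows: on a stratum with a genus-one vertex the parts of $\p$ can all be assigned to that vertex, so $\int_{[G]}\kappa_\p$ does not vanish for partitions with many parts $\geq 2$, and vanishing of entries would not by itself give linear dependence of rows anyway.

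For the lower bound, your witness combs are mis-sized. If $\p$ has $s$ parts $a_i\geq 2$ and you attach one tail $\Mbar_{0,a_i+3}$ per such part to a genus-one core carrying the remaining markings, the stratum has dimension $n-s$, which equals the required dimension $d$ only when $s=n-d$; for $\p\in\PP_1(d,n-d)$ with fewer than $n-d$ parts $\geq 2$ (including all of $\PP(d,n-d)$) your $G(\p)$ lives in the wrong codimension and the diagonal pairing is not even defined. The paper's independent family $A_1(d;1,n)\cup A_2(d;1,n)$ consists of cycles with exactly $n-d$ genus-zero components plus a genus-one component absorbing the slack, and even then the independence argument is delicate: it splits into two triangularity arguments (one via the map $\ppp$ and the refinement order, one requiring stabilization by adding $N$ extra markings and converting to purely genus-zero cycles via Lemma~\ref{lem:k-trivial-2}). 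So while your plan is pointed in the right direction, both the discharge of excess parts and the choice of witnesses need to be replaced by the specific relations and cycles that the paper constructs.
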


Let us now describe our strategy for bounding the rank of the kappa ring.
Let $\pi_{g,n}^\ell:\Modbar_{g,n+\ell}\ra \Modbar_{g,n}$ 
denote the forgetful map
which forgets the last $\ell$ markings.
For every  $\p=(p_1,...,p_\ell)\in\PP(d)$ let 
\begin{displaymath}
\langle\p\rangle_{g,n}:=\left(\pi_{g,n}^\ell\right)_*\left(
\prod_{i=1}^\ell \frac{1}{1-p_i\psi_{i+n}}\right)\in \kring.
\end{displaymath}
 Let $\langle\p\rangle_{g,n}^j$ denote the degree $j$ part of $\langle \p\rangle_{g,n}$.
The bracket classes $\left\{\langle \p\rangle_{g,n}^d\right\}_{\p\in\PP(d)}$ generate 
the kappa ring $\kappa^d(\Modbar_{g,n})$ 
(Lemma~\ref{lem:basis}, c.f. Proposition 3 from \cite{Rahul-lambdag}).
For  every positive integer $l$, every partition
$$\n=(n_1,..,n_m)\in\PP(2d-2g+2-n-l)$$ and every partition
$\p=(p_1,...,p_\ell)\in\PP(d)$  let 
\begin{displaymath}
C_{\n}^{\p}:=\frac{(-1)^{l+\ell}}{\mathrm{Aut}(\p)}\prod_{i=1}^\ell
\frac{p_i^{p_i-1}}{p_i!}\sum_{\phi}\prod_{i=1}^{m}p_{\phi(i)}^{1-n_i},
\end{displaymath}
where the last sum is over all injections $\phi:\{1,...,m\}\ra \{1,...,\ell\}$. Let 
\begin{displaymath}
 J^{lead}(\n):=\sum_{\p\in\PP(d)}C_{\n}^{\p}\big\langle \p\big\rangle^d_{g,n}\in 
 \kappa^d\left(\Mbar_{g,n}\right).
 \end{displaymath}
 The kappa classes $J^{lead}(\n)$ are trivial over $\Mod_{g,n}^c\subset \Mgnbar$ 
 (the moduli space  of curves of compact type) \cite{Rahul-k}. 
 If $\kappa\in\kring$ has trivial integrals over all 
 combinatorial cycles which only contain genus zero components, then  $\kappa$ 
  is a linear combination of the classes $J^{lead}(\n)$.
In particular, so is every element of $\kappa_0^*(\Mgnbar)$.\\

A \SCC $G$ is called a comb graph if $G$ is a tree,  contains a distinguished vertex 
$v_\infty$, and every vertex $v\in V(G)\setminus\{v_\infty\}$ is connected with
an edge $e_v$ to $v_\infty$. Furthermore, the markings $1,...,n$ are all assigned
to $v_\infty$. If the sum of the genera associated with the vertices of $G$ is $g$,
we get an embedding 
$$\imath^G:\frac{[G]}{\Aut(G)}\lra \Mgnbar,$$
of the quotient of $[G]$ by the group of automorphisms in $\Mgnbar$.
The genus associated with $v_\infty$ is denoted by $g_\infty(G)$.
 Since every $\kappa\in\kappa_0^*(\Mgnbar)$ is a linear combination of the classes 
 $J^{lead}(\n)$ the localization argument of Section 8 from \cite{Rahul-k} gives a 
 presentation 
\begin{displaymath}
\kappa=\sum_{\substack{G:\text{comb}\\ g_\infty(G)<g}}\imath^G_*(\psi_G(\kappa)),
\ \ \ \ \ \psi_G(\kappa)\in R^*([G]).
\end{displaymath}
In particular, for $g=1$, there is only one comb graph $G$ with $g_\infty(G)<1$.
For this comb graph $[G]\simeq \Mbar_{1,1}\times \Mbar_{0,n+1}$.
The above argument implies that every element $\kappa\in\kappa_0(\Mbar_{1,n})$ is 
of the form
$$\kappa=\imath^G_*\big[\pi_1^*(\lambda_1)\pi_2^*(\psi)\big],\ \ \ \text{for some }
\psi\in R^*(\Mbar_{0,n+1}).$$  
If a combinatorially trivial class in the tautological ring of $\Mbar_{1,n}$
takes the form of the right-hand-side of the above equation  
one can quickly conclude that $\psi=0$, and thus $\kappa=0$.\\

\begin{figure}
\begin{center}
\includegraphics[scale=1]{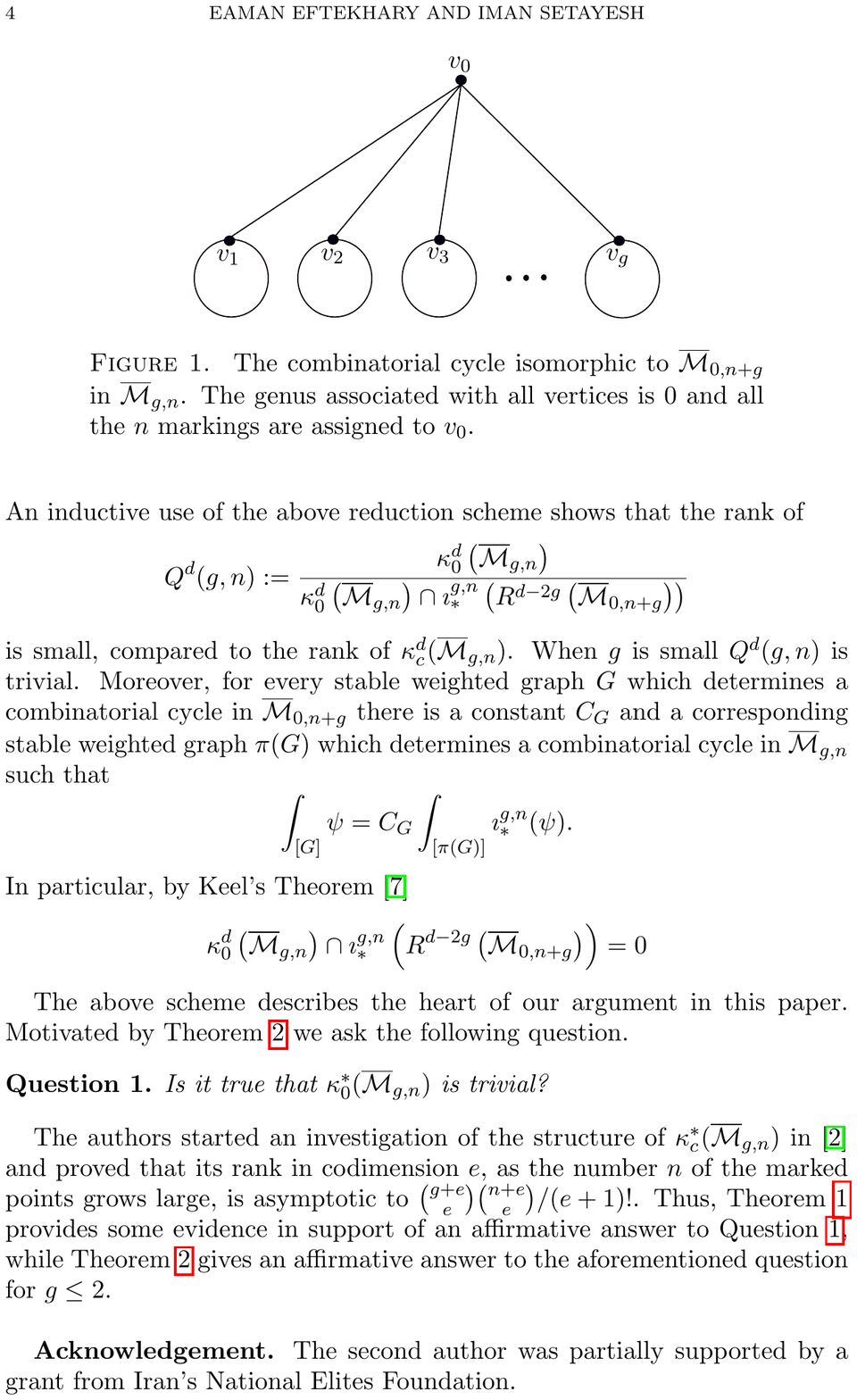}
 \caption{\label{fig:fig1}
{The combinatorial cycle isomorphic to $\Mbar_{0,n+g}$ in $\Mgnbar$.
The genus associated with all vertices is $0$ and all the $n$ markings 
are assigned to $v_0$.}}
\end{center}
\end{figure}

In general, an inductive use of the above procedure, which represents  the elements
of an appropriate subspace  of  $R^*(\Mgnbar)$ in terms of the push-forwards of 
other tautological classes from boundary strata, 
is used in this paper. Let $G_{g,n}$ denote the 
\SCC whose underlying graph is illustrated in Figure~\ref{fig:fig1}. Thus 
$V(G_{g,n})=\{v_0,...,v_g\}$, $G_{g,n}$ has $2g$ edges and   all the markings 
$1,...,n$ are assigned to $v_0$. The \SCC $G_{g,n}$ determines an embedding
\begin{displaymath}
\imath^{g,n}:\frac{\Mbar_{0,n+g}}{S_g}\simeq 
\frac{\Mbar_{0,n+g}\times \overbrace{\Mbar_{0,3}\times
 ... \times \Mbar_{0,3}}^{g\ \text{copies}}}{S_g}=\frac{[G_{g,n}]}{\Aut(G_{g,n})}
 \lra \Mgnbar.
\end{displaymath}
An inductive use of the above reduction scheme  shows that the rank of 
\begin{displaymath}
Q^d(g,n):=\frac{\kappa_0^d\left(\Mgnbar\right)}
{\kappa_0^d\left(\Mgnbar\right)\cap\imath^{g,n}_*
\left(R^{d-2g}\left(\Mbar_{0,n+g}\right)\right)}
\end{displaymath}
is small, compared to the rank of $\kappa_c^d(\Mgnbar)$. When $g$ is small
$Q^d(g,n)$ is trivial. Moreover, for every \SCC 
$G$ which determines a combinatorial cycle in $\Mbar_{0,n+g}$ there is a 
constant $C_G$ and a corresponding  \SCC $\pi(G)$ which determines a combinatorial cycle 
in $\Mgnbar$ such that 
\begin{displaymath}
\int_{[G]}\psi=C_G\int_{[\pi(G)]}\imath^{g,n}_*(\psi).
\end{displaymath}
In particular, by Keel's Theorem \cite{Keel}
\begin{displaymath}
\kappa_0^d\left(\Mgnbar\right)\cap
 \imath^{g,n}_*\left(R^{d-2g}\left(\Mbar_{0,n+g}\right)\right)=0
\end{displaymath}

The above scheme  describes the heart of our argument in this paper.
Motivated by Theorem~\ref{thm:main1} we ask the following question.
\begin{question}\label{question}
Is it true that $\kappa^*_0(\Mgnbar)$ is trivial?
\end{question}

The authors started an investigation of the structure of   
$\kappa_c^*(\Mbar_{g,n})$ in \cite{ES-k1} and proved that its rank  in codimension $e$,
as the number $n$ of the marked points grows large, is asymptotic to 
${{g+e\choose e}{n+e\choose e}}/{(e+1)!}$.
Thus, Theorem~\ref{thm:main2} provides some evidence in support of an 
affirmative answer to Question~\ref{question}, while Theorem~\ref{thm:main1} 
gives an affirmative answer to the aforementioned question for $g\leq 2$. \\

{\bf{Acknowledgement.}} 
The second author was partially supported by a grant from Iran's National Elites 
Foundation.\\

\section{Combinatorial cycles and the $\psi$ classes}\label{sec:background}
It is sometimes more convenient to use alternative bases for the kappa ring of
$\Mgnbar$, instead of the kappa classes. Let
$$\pi_{g,n,k}^{m}:\Modbar_{g,n+m}\ra \Mbar_{g,n+k}$$
denote the forgetful map which forgets the last
$m-k$ markings.
\begin{defn}\label{defn:psi-classes}
For every multi-set $\p=(p_1\geq p_2\geq ...\geq p_m)$ of positive integers
with $p_k>1$ and $p_{i}=1$ for $k<i\leq m$ define
\begin{itemize}
\item $|\p|:=m$ and $d(\p):=\sum_{i=1}^m p_i$
\item $\p^-:=(p_1-1\geq ...\geq p_{k}-1)\in \PP(d(\p)-m)$
\item $\psi(\p):=\psi(p_1,...,p_{m}):=\left(\pi_{g,n,0}^{m}\right)_*
\left(\prod_{i=1}^m\psi_{n+i}^{p_i+1}\right)\in\kappa^{d(\p)}(\Mgnbar)$
\item $\kappa(\p):=\kappa(p_1,...,p_m):=\prod_{i=1}^m\kappa_{p_i}
\in\kappa^{d(\p)}(\Mgnbar)$
\item $\langle \p\rangle_{g,n;k}:
=\left(\pi_{g,n,k}^{m}\right)_*\left(
\frac{1}{m!}\sum_{\sig\in S_m}\prod_{i=1}^m
\frac{1}{1-p_{\sig(i)}\psi_{n+i}}\right)\in R^*(\Mbar_{g,n+k})$.
\end{itemize}
Let $\langle\p\rangle=\langle\p\rangle_{g,n;0}$ and  let $\langle \p\rangle^j$ 
denote the degree $j$ part of $\langle\p\rangle$. Similarly, let $\langle \p\rangle_{g,n;k}^j$ 
denote the degree $j$ part of $\langle\p\rangle_{g,n;k}$ and set
$$\langle \p\rangle^{g,n;k}_j:=\langle \p\rangle_{g,n;k}^{3g-3+n+k-j}.$$ 
\end{defn}

\begin{lem}\label{lem:basis}
The subsets of $\Acal^d(\Mgnbar)$ defined by
$$\Big\{ \psi(\p)\ \big|\ \p\in\PP(d)\Big\},\ \ 
\Big\{ \kappa(\p)\ \big|\ \p\in\PP(d)\Big\}\ \ \text{and} \ \
\Big\{ \langle\p\rangle^d \big|\ \p\in\PP(d)\Big\}
$$
are related by invertible linear transformations.
\end{lem}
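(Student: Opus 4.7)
The plan is to exhibit, for two of the three pairs, a triangular change-of-basis matrix with nonzero diagonal entries with respect to a natural partial order on $\PP(d)$, and then to deduce the invertibility of the third transformation by composition.

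First I would relate $\psi(\p)$ and $\kappa(\p)$ via an Arbarello--Cornalba-type push-forward formula. Starting from $\psi(p_1,\ldots,p_m)=(\pi_{g,n,0}^m)_*(\prod_i\psi_{n+i}^{p_i+1})$, iterated application of the forgetful push-forward together with the comparison $\psi_j=\pi^*\psi_j+D_{j,n+m}$ and the vanishing $\psi_j|_{D_{j,n+m}}=0$ yields
\begin{displaymath}
\psi(\p)=\sum_{\Pi\vdash\{1,\ldots,m\}}c_\Pi\,\prod_{B\in\Pi}\kappa_{\sum_{i\in B}p_i},
\end{displaymath}
indexed by set partitions $\Pi$ of $\{1,\ldots,m\}$, with universal combinatorial coefficients $c_\Pi$ independent of $(g,n)$. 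The discrete set partition (into singletons) contributes $\kappa(\p)$ with coefficient $1$, while every coarser $\Pi$ produces a $\kappa$-monomial indexed by a partition $\q\in\PP(d)$ strictly coarser than $\p$. Thus the matrix sending $\{\kappa(\p)\}_{\p\in\PP(d)}$ to $\{\psi(\p)\}_{\p\in\PP(d)}$ is unitriangular in the refinement order on $\PP(d)$, hence invertible.

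Next I would relate $\langle\p\rangle^d$ to $\psi(\p)$ by expanding the geometric series in the defining formula:
\begin{displaymath}
\langle\p\rangle_{g,n}=\sum_{\mathbf{k}\in\Z_{\geq 0}^m}\left(\frac{1}{m!}\sum_{\sigma\in S_m}\prod_{i=1}^m p_{\sigma(i)}^{k_i}\right)\left(\pi_{g,n,0}^m\right)_*\left(\prod_{i=1}^m\psi_{n+i}^{k_i}\right),
\end{displaymath}
where the degree $d$ part retains only the tuples $\mathbf{k}$ with $\sum k_i=d+m$. Applying the string and dilaton equations to any index with $k_i\in\{0,1\}$, each push-forward collapses to a class of the form $\psi(\q)$ with $\q\in\PP(d)$ whose length equals the number of indices $i$ with $k_i\geq 2$. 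Tuples with $k_i\geq 2$ for every $i$ are precisely the permutations of $(p_1+1,\ldots,p_m+1)$ and produce $\psi(\p)$; after symmetrizing over $\sigma$ the resulting coefficient is a strictly positive symmetric polynomial in $p_1,\ldots,p_m$ and so is nonzero. All remaining contributions produce $\psi(\q)$ with $|\q|<|\p|$. Ordering partitions of $d$ by length (refined to the refinement order), the transformation from $\{\psi(\p)\}$ to $\{\langle\p\rangle^d\}$ is triangular with nonzero diagonal, hence invertible; composition then yields invertibility of the remaining change of basis.

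The principal obstacle is the bookkeeping in the second step: one must carefully identify which tuples $\mathbf{k}$ reduce to a given $\psi(\q)$ after invoking the string and dilaton relations, symmetrize correctly under the $S_m$-action while accounting for the stabilizer $\Aut(\p)$ when $\p$ has repeated parts, and confirm that the leading coefficient does not accidentally vanish. The push-forward expansion in the first step is subtler to derive but, once in hand, is classical Arbarello--Cornalba combinatorics and presents no essential difficulty.
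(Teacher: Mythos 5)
Your first step is sound: the Arbarello--Cornalba push-forward formula does express $\psi(\p)$ as a sum over set partitions of $\{1,\dots,m\}$ in which the discrete partition contributes $\kappa(\p)$ with coefficient $1$ and every coarser block structure contributes a $\kappa$-monomial indexed by a strictly coarser (hence shorter) partition of $d$, so that change of basis is unitriangular in the refinement order. This matches the first half of the paper's (citation-only) proof.

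The second step, however, contains a genuine error. You assert that among the exponent tuples $\mathbf{k}$ with $\sum_i k_i=d+m$, those with $k_i\geq 2$ for all $i$ are ``precisely the permutations of $(p_1+1,\dots,p_m+1)$.'' That is false: the condition $\sum k_i=d+m$, $k_i\geq 2$ constrains only the total, not the individual entries, so \emph{every} length-$m$ partition $\q$ of $d$ arises as $(k_1-1,\dots,k_m-1)$ for some admissible tuple, and each such $\psi(\q)$ appears in $\langle\p\rangle^{d}$ with a nonzero coefficient $\sum p_{\sigma(i)}^{k_i}>0$. Concretely, for $\p=(2,2)$ and $d=4$ the tuples $(2,4)$ and $(4,2)$ contribute $\psi(3,1)$ alongside the contribution of $(3,3)$ to $\psi(2,2)$; symmetrically $\langle(3,1)\rangle^{4}$ contains $\psi(2,2)$. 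The matrix is therefore only \emph{block}-triangular with respect to length, with full (non-triangular) diagonal blocks indexed by the partitions of a given length, and incomparable partitions of equal length are coupled in both directions, so no refinement-compatible total order makes it triangular. Invertibility of these blocks is true but not formal (in the example above the $2\times 2$ block has determinant $2304$, not a product of diagonal entries); establishing it is exactly the content of Proposition 3 of Faber--Pandharipande, which the paper cites and which is proved there by an explicit inversion rather than a triangularity argument. As written, your proof of the second transformation, and hence of the composed third one, does not go through.
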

\begin{proof}
The transformation relating the first two sets  
(which is  independent of $g$ and $n$) is due to Faber and is 
discussed in \cite{AC}. The transformation relating the 
first set to the third set is discussed in 
Proposition 3 from \cite{Rahul-lambdag}. This later transformation only depends
on $2g-2+n$.
\end{proof}

Let $\Psi(d)$ denote the formal vector space over $\Q$ which is freely generated by 
the partitions $\p\in \PP(d)$. 
There are surjections 
\begin{displaymath}
\psi_{g,n},\kappa_{g,n},\langle\rangle_{g,n}:\Psi(d)\lra \kappa^d(\Mgnbar)
\end{displaymath}
which are defined by
\begin{displaymath}
\begin{split}
\psi_{g,n}(\sum_{\p\in\PP(d)}a_\p.\p):=\sum_{\p\in\PP(d)}a_\p \psi(\p),\ \ \ \ \
&\kappa_{g,n}(\sum_{\p\in\PP(d)}a_\p.\p):=\sum_{\p\in\PP(d)}a_\p \kappa(\p)\\
\text{and}\ \ \ \ \Big\langle\sum_{\p\in\PP(d)}a_\p.\p\Big\rangle_{g,n}
&:=\sum_{\p\in\PP(d)}a_\p \langle\p\rangle
\end{split}
\end{displaymath}
Lemma~\ref{lem:basis} implies that there are invertible matrices $P_d:\Psi(d)\ra \Psi(d)$
and $Q_{d,m}:\Psi(d)\ra \Psi(d)$ for $m\in \Z^{+}$ such that
\begin{displaymath}
\psi_{g,n}=\kappa_{g,n}\circ P_d\ \ \ \text{and}\ \ \ 
\langle\rangle_{g,n}=\kappa_{g,n}\circ Q_{d,n+2g}.
\end{displaymath}

In \cite{Rahul-k} Pandharipande shows that associated with every pair of partitions 
$$\mm\in\PP(d)\setminus\PP(d,2g-2+n-d)\ \ \ \text{and}\ \ \ 
\p\in\PP(d)$$
there is a  rational number $C_{\mm^-}^\p$ with the property that 
\begin{itemize}
\item The matrix $\left(C_{\mm^-}^\p\right)_{\mm,\p}$ is of full-rank, 
with rank equal to 
$$|\PP(d)|-|\PP(d,2g-2+n-d)|.$$
\item If we set 
$$j(\mm^-):=\sum_{\p\in\PP(d)}C_{\mm^-}^\p.\p\in\Psi(d)$$
 the restriction of $\langle j(\mm^-)\rangle_{g,n}\in\kring$ to $\Mod_{g,n}^c$
is trivial.\\
\end{itemize}

For $g=0$, these relations among the kappa classes are all the possible relations. 
In other words, the kernel of the map 
\begin{displaymath}
\langle\rangle_{0,n}:\Psi(d)\lra \kappa^d(\Mbar_{0,n})
\end{displaymath}
is generated by $\{j(\m^-)\}_{\m}$. This observation gives a surjection
$$T^c_{g,n}:\kappa(\Mbar_{0,n+2g})\ra \kappa(\Mod_{g,n}^c).$$
This map {\emph{translates}} every kappa class over $\Mbar_{0,n+2g}$ to a kappa 
class over $\Mod_{g,n}^c$, which comes from the same formal expression. Nevertheless,
the aforementioned map does not have a clear geometric meaning 
(at least to the authors). We will encounter such 
homomorphisms again when we try to obtain relations among the kappa classes
in this paper.  \\

\begin{defn}
A {\emph{weighted graph}} $G$ is a finite connected 
graph with the set $V(G)$ of vertices, 
the set $E(G)$ of edges and a weight function
$$\epsilon=\epsilon_G:V(G)\ra \Z^{\geq 0}\times 2^{\{1,...,n\}},$$
where $2^{\{1,...,n\}}$ denotes the set of subsets of $\{1,...,n\}$.
For $i\in V(G)$ denote the degree of $i$ by $d_i=d(i)$ and let $\epsilon(i)=(g_i,I_i)$.
$G$ is called a {\emph{\SCC}} if $\{I_i\}_{i\in V(G)}$ is a partition of $\{1,...,n\}$ and 
 for every vertex $i\in V(G)$, $2g_i+|I_i|+d_i>2$. Define $n(G)=n$ and 
$$g(G):=\Big(\sum_{i\in V(G)}g_i\Big)+|E(G)|-|V(G)|+1.$$
\end{defn}

Associated with a \SCC $G$ there is a natural map
$$\imath_G:\Ccal(G)=\prod_{i\in V(G)}\Modbar_{g_i,|I_i|+d_i}\lra \Mbar_{g(G),n(G)}$$
which is an embedding after we mod out the source by its  automorphisms.
Thus, a \SCC $G$ determines a {\emph{combinatorial cycle}}
$$[G]:=(\imath_G)_*\left[\Ccal(G)\right]\in A_{d}\left(\Modbar_{g(G),n(G)}\right),$$
where $d=3g(G)-3+n(G)-|E(G)|$. 
\\

Let $H=H_{g,n}$ be a \SCC with a single vertex $v$, $g$ self edges from $v$ to itself, and 
with $\epsilon(v)=(0,\{1,...,n\})$. $H$ determines a homomorphism 
$\imath^H:\Mbar_{0,n+2g}\ra \Mgnbar$. If $G$ is a \SCC with 
$g(G)=0$ and $n(G)=n+2g$ then
\begin{displaymath}
\int_{\imath^H_*[G]} \langle\q\rangle_{g,n}=\frac{1}{24^g\times g!}
\int_{[G]} \langle \q\rangle_{0,n+2g}\ \ \ \forall\ \q\in\PP(d).
\end{displaymath}
\begin{remark}\label{remark:rahuls-result}
Let $\kappa=\langle\psi\rangle_{g,n}$ for some $\psi\in\Psi(d)$ is such that 
$\int_{\imath^H_*[G]}\kappa=0$ for all \SCC $G$ with $g(G)=0$ and $n(G)=n+2g$.
By the above observation, $\langle \psi\rangle_{0,n+2g}$ has trivial integral over all 
combinatorial cycles, and is thus trivial by Keel's Theorem \cite{Keel}.
By Pandharipande's result this implies that $\psi=\sum_{\m}a_\m j(\m^{-})$, and 
thus
$$\kappa=\sum_{\m}a_\m \langle j(\m^-)\rangle_{g,n}.$$
\end{remark}

Fix the \SCC $G$ and $\psi(\p)=\psi(p_1,...,p_k)$   with $n=n(G)$, $g=g(G)$,
$d(\p)+|E(G)|=3g-3+n$   and  $V(G)=\{1,...,m\}$. Let 
  $$Q=\left\{(h,r)\in\Z^{\geq 0}\times \Z^{\geq 0}\ \big|\ \ 2h+r>2\right\}.$$
  The {\emph{modified weight}} multi-set associated with
$G$ is the multi-set
\begin{displaymath}
\begin{split}
&\q_G:=\big(\theta_G(i)\in Q\ \big|\ i\in\{1,...,m\}\big),\ \ \ \text{where}\\
&\theta_G(i):=(g_i,m_i=|I_i|+d_i),\ \ \ \forall\ 1\leq i\leq m.\\
\end{split}
\end{displaymath}
The integral
\begin{displaymath}
\big\langle\psi(\p)\ ,\ [G]\big\rangle=\int_{[G]}\psi(\p)
=\int_{(\pi^{m}_{g,n,k})^*[G]}\prod_{j=1}^k \psi_{n+j}^{p_j+1}\in \Q
\end{displaymath}
only depends on the multi set $\q_G$ \cite{ES-k1}. 
We  denote the value of the above integral by $\langle \psi(\p),\q_G\rangle_{g,n}$,
or just $\langle \psi(\p),\q_G\rangle$ if there is no confusion.
We denote by $\QQ(d;g,n)$ the set of all multi-sets
$\q=(\theta_i)_{i=1}^m$  such that $\q=\q_G$ for some 
\SCC $G$ with $g=g(G)$, $n=n(G)$ and $d=3g-3+n-|E(G)|$. 
A kappa class $\kappa\in\kappa^d(\Mgnbar)$ is 
{{combinatorially trivial}} if $\langle\kappa,\q\rangle=0$ for all 
$\q\in\QQ(d;g,n)$. \\

For a partition $\n=(n_1,...,n_k)\in \PP(n)$ of length $k=|\n|$,  let 
\begin{displaymath}
\begin{split}
&\q_0(\n)=\{(0,n_1+2),...,(0,n_k+2)\}\in \QQ(n-k;1,n),\ \ \text{and}\\
&\q_l(\n)=\{(1,l),(0,n_1+2),...,(0,n_k+2)\}\in \QQ(n+l-k;1,n+l),\ \ \ l\geq 1.
\end{split}
\end{displaymath}
Every element of $\QQ(d;1,n)$ is of the  form $\q_l(\n)$ for some 
non-negative integer $l$ and some $\n\in \PP(n-l;n-d)$. Here
 $\PP(d;k)$ denotes the set of the partitions of $d$ into precisely $k$ parts.\\
 
If $\n,\mm\in \PP(d)$ define $\n<\mm$ if $\n$ refines $\mm$. This partial ordering 
may be extended to a total ordering on  $\PP(d)$. We fix one 
such total ordering and will refer to it as the {\emph{refinement ordering}}. 
Define
$$\ppp:\QQ(d;g,n)\lra \PP(d,3g-2+n-d)$$
by sending $\q=\{(g_i,n_i)\}_{i=1}^k$ to $\{3g_i-3+n_i\}_{i=1}^k$.
If $\langle \psi(\n),\q\rangle$ is non-trivial for some $\q\in\QQ(d;g,n)$ then 
$\n$ refines $\ppp(\q)$ \cite{ES-k1}.\\

\section{Localization and moduli space of stable maps to $\Pp^1$}\label{sec:localization}
\subsection{The vanishing cycles}
Fix the integers $m\geq k\geq 0$ and
let $$\Modbar_{g,n+m}(\Pp^1,d)$$ denote the moduli space of stable maps of degree 
$d$ from curves of genus $g$ with $n+m$ marked points to $\Pp^1$ and denote by 
$$\epsilon: \Modbar_{g,n+m}(\Pp^1,d)\lra \Mbar_{g,n+k}$$
 the homomorphism which forgets the map and the last $m-k$ markings.\\

 Let $\C^*$ act  on $V=\C\oplus \C$ by
$$\zeta. (z_1,z_2)=(z_1,\zeta z_2)\ \ \forall\ \zeta\in\C^*,\ (z_1,z_2)\in\C\oplus \C.$$
Let $\pp_0=[0:1]$ and $\pp_\infty=[1:0]$ denote the fixed points of the 
corresponding action on $\Pp^1=\Pp(V)$. 
For every line bundle $L\ra \Pp(V)$, an equivariant lifting of the $\C^*$ action 
to $L$ is determined by the weights $l_0$ and $l_\infty$ of the fiber representations 
$L_0=L|_{\pp_0}$ and $L_{\infty}=L|_{\pp_\infty}$.
The canonical lift of the action  for $T_{\Pp^1}$ has weights $[l_0,l_\infty]=[1,-1]$.\\

Let $\pi:\Ucal_{g,n+m}(\Pp(V),d)\ra \Modbar_{g,n+m}(\Pp(V),d)$ denote the universal curve
and $\mu:\Ucal_{g,n+m}(\Pp(V),d)\ra\Pp(V)$
denote  the universal map. The action of $\C^*$ on $\Pp(V)$ induces $\C^*$ actions 
on $\Ucal_{g,n+m}(\Pp(V),d)$ and $\Modbar_{g,n+m}(\Pp(V),d)$ 
compatible with $\pi$ and $\mu$. Let 
$$\left[\Modbar_{g,n+m}(\Pp(V),d)\right]^{vir}\in A_{2g+2d-2+n+m}^{\C^*}
\left(\Modbar_{g,n+m}(\Pp(V),d)\right)$$
denote the $\C^*$-equivariant virtual fundamental class of $\Modbar_{g,n+m}(\Pp(V),d)$
\cite{Rahul-localization}.\\

Following \cite{Rahul-k}  we consider three types of equivariant chow classes 
over the moduli space  $\Modbar_{g,n+m}(\Pp(V),d)$:
\begin{itemize}
\item The linearization $[0,1]$ on $\Ocal_{\Pp(V)}(-1)$ defines the $\C^*$ action 
on the rank $d+g-1$ bundle
$$\R=R^1\pi_*\left(\mu^*\Ocal_{\Pp(V)}(-1)\right)\lra \Modbar_{g,n+m}(\Pp(V),d).$$
We denote the top Chern class of this bundle by
$$c_{top}(\R)\in A_{\C^*}^{d+g-1}\left(\Modbar_{g,n+m}(\Pp(V),d)\right).$$

\item For each marking $i$, let $\psi_i\in A_{\C^*}^1\left(\Modbar_{g,n+m}(\Pp(V),d)\right)$
denote the first Chern class of the canonically linearized cotangant line corresponding to 
the $i^{th}$ marking. \\

\item With $\mathrm{ev}_i:\Modbar_{g,n+m}(\Pp(V),d)\ra \Pp(V)$ denoting the $i$-th
 evaluation map and with the $\C^*$-linearization $[1,0]$ on $\Ocal_{\Pp(V)}(1)$, let
 $$\rho_i=c_1\left(\mathrm{ev}_i^*\Ocal_{\Pp(V)}(1)\right)\in 
 A_{\C^*}^1\left(\Modbar_{g,n+m}(\Pp(V),d)\right),$$
 while with the $\C^*$ linearization $[0,-1]$ on $\Ocal_{\Pp(V)}(1)$ we let 
 $$\tilde \rho_i=c_1\left(\mathrm{ev}_i^*\Ocal_{\Pp(V)}(1)\right)\in 
 A_{\C^*}^1\left(\Modbar_{g,n+m}(\Pp(V),d)\right).$$
\end{itemize}
Note that in the non-equivariant limit $\rho_i^2=0$, and that 
$\epsilon$ is equivariant with respect to the trivial action on $\Mbar_{g,n+k}$.\\

Fix the cycle dimension $e$ and the sequence $\n=(n_{1},...,n_{m})$ with 
$$\sum_{i=1}^m n_i=d+g-1-e-l,\ \ \  l>0,$$
which determines a partition in $\PP(d+g-1-e-l;m)$, 
 denoted by $\n$ by slight abuse of the notation.
 The partition $\n$ is of the form $\n=\mm^-$,
where $$\mm\in \PP(d)\setminus\PP(d,\max\{e-g+1,k-1\}).$$

 Let $I(\n)=I(\n;d,g,n,k)$ denote the $\C^*$-equivariant 
push-forward
\begin{displaymath}
\epsilon_*\left(\rho_{n+1}^{l}\prod_{i=1}^{m}\rho_{i+n}\psi_{i+n}^{n_i}
\prod_{j=1}^n\tilde \rho_j \ c_{top}(\R)\cap 
\left[\Modbar_{g,n+m}(\Pp(V),d)\right]^{vir}\right).
\end{displaymath}
Since the degree of 
\begin{displaymath}
\rho_{n+1}^{l}\left(\prod_{i=1}^{m}\rho_{i+n}\psi_{i+n}^{n_i}\right)\left(
\prod_{j=1}^n\tilde \rho_j\right) c_{top}(\R)
\end{displaymath}
is  $2d+2g-e-2+n+m$ and the cycle dimension of the virtual fundamental 
class is $2d+2g-2+n+m$, the 
cycle dimension of the class $I(\n)$ is 
$$e=(2d+2g-2+n+m)-(2d+2g-e-2+n+m).$$
In other words, $I(\n)\in A_{\C^*}^{3g-3+n+k-e}\left(\Modbar_{g,n+k}\right)$. 
Since the exponent of
$\rho_{n+1}$ is at least $2$, $I(\n)$ vanishes in the non-equivariant limit.\\

\subsection{The localization terms}
The virtual localization formula of \cite{Rahul-localization} may be used to calculate 
$I(\n)$ in terms of the tautological classes on $\Modbar_{g,n+k}$.
The sum in the localization formula is over connected decorated graphs $\Gamma$ 
(indexing the $\C^*$-fixed loci of $\Modbar_{g,n+m}(\Pp(V),d)$). 
Every vertex of $\Gamma$
either lies over $\pp_0$ or over $\pp_\infty$, 
and is labelled by a genus.  The edges of the graph lie 
over $\Pp^1$ and are labelled with degrees (of the maps corresponding to the edges).
The total sum of these degrees is  equal to $d$. The graphs carry $n+m$ markings over 
their vertices. For a vertex $v$ of $\Gamma$ 
let $d(v)$ denote the degree of $v$.\\

If a graph $\Gamma$ has a vertex $v$ over $\pp_0$ with $d(v)>1$, $v$ yields a trivial 
Chern root of the bundle $\R$ with trivial weight $0$ in the numerator of the localization 
formula, by  our choice of linearization on the bundle $\R$. Hence the contribution of 
such graphs to the sum in the localization formula is trivial.
Thus, only comb graphs $\Gamma$ contribute to $I(\n)$. Every comb graph contains 
a set $V_0=V_0(\Gamma)$ of  vertices  which lie over
 $\pp_0$, and each $v\in V_0$ is connected by an 
edge to a unique vertex $v_\infty$ which lies over $\pp_\infty$. 
The linearization of the classes $\rho_{n+1},...,\rho_{n+m}$ and 
$\tilde\rho_1,...,\tilde\rho_n$ implies that the first $n$ markings lie 
on $v_\infty$ and the last $m$ markings are placed on the vertices in $V_0$.
For every $v\in V_0$ let $g_v$ denote the genus associated with  $v$ and 
let $I_v\subset \{1,...,m\}$ determine the subset of the last $m$ markings 
which is associated with  $v$. Note that the genus associated with  $v_\infty$ is 
 $g_\infty=g-\sum_{v\in V_0}g_v$. Denote the degree associated with 
the edge connecting $v$ to $v_\infty$ by $p_v$. 
The fixed locus associated with the decorated comb graph $\Gamma$ is thus determined 
by a multi-set $\{(g_v,p_v, I_v)\}_{v\in V_0}$ such that $\sum g_v\leq g$,
$\sum_{v}p_v=d$ and $\{I_v\}_v$ is a partition of $\{1,...,m\}$. 
We  abuse the notation and use $\Gamma$ to refer to this associated multi-set. 
The partition $(p_v)_{v\in V_0(\Gamma)}$ of $d$ is denoted by $\p_\Gamma$. 
\\

The group $S_{\Gamma}$ of  permutations $\sigma:V_0\ra V_0$
of the vertices in $V_0=V_0(\Gamma)$ acts on the multi-set associated with $\Gamma$ 
by sending $\{(g_v,p_v,I_v)\}_{v\in V_0}$ to  $\{(g_v,p_{\sig(v)},I_v)\}_{v\in V_0}$.
We denote the image of $\Gamma$ under the action of $\sig\in S_\Gamma$ by 
$\sig(\Gamma)$.
The automorphism group of $\Gamma$ consists of the permutations  
$\sig$ of the vertices such that for every vertex $v\in V_0$ either 
$I_v=I_{\sig(v)}=\emptyset$ and  $g_{\sig(v)}=g_v$, or $\sig(v)=v$. 
We denote the group of automorphisms of 
$\Gamma$ by $\Aut(\Gamma)$.\\

If  $I_v=\{i_1,...,i_{k_v}\}$    the fixed locus corresponding 
to $\Gamma$  contains a product factor 
$\Mbar^{v,\Gamma}\simeq\Modbar_{g_v,k_v+1}$, 
provided that $2g_v+k_v>1$. 
The subset $I_v$ labels $k_v$ of the markings on $\Mbar^{v,\Gamma}$ and 
we use the vertex $v$ itself to label the last marking on this moduli space.
The classes 
$\psi_{i_j+n}^{n_{i_j}}$ carry trivial $\C^*$ weight. Moreover, the integrand term 
$c_{top}(\R)$ yields a factor $\lambda_{g_v}$ on $\Modbar^{v,\Gamma}$. 
Thus, we obtain the class $\lambda_{g_v}\Ps_{v,\Gamma}(\n)\in A^*(\Mbar^{v,\Gamma})$
where
\begin{displaymath}
\psi_{v,\Gamma}(\n):=\prod_{i=1}^{k_v}\psi_{i_j+n}^{n_{i_j}}\in
 A^{n_{i_1}+...+n_{i_{k_v}}}\left(\Modbar^{v,\Gamma}\right)
\end{displaymath}
over this product factor, which is trivial unless 
$$\sum_{j=1}^{k_v}(n_{i_j}-1)\leq 2g_v-2.$$ 
In particular, $(g_v,k_v)\neq (0,i)$ with $i>1$. In other words, if $g_v=0$ the vertex 
$v$ can accommodate at most one of the markings from $\{n+1,...,n+m\}$.\\

Let 
$\Mbar^{\infty,\Gamma}:=\Mbar_{g_\infty, n+|V_0(\Gamma)|}$,
where the last $|V_0(\Gamma)|$ markings are again labelled by the vertices 
in $V_0(\Gamma)$. Denote the subset of genus zero vertices in $V_0$ with no 
markings on them by $V^0=V^0(\Gamma)$, the subset of genus zero vertices $v$ 
with one marking by $V^1=V^1(\Gamma)$ and set 
$V^2=V^2(\Gamma)=V_0\setminus (V^0\cup V^1)$. For $v\in V^1$, if $I_v$ 
consists of the single element $i\in\{1,...,m\}$ we set $n_v=i$.\\

The contribution of the fixed locus corresponding to $\Gamma$ to $I(\n)$ 
may be computed following \cite{Rahul-k}. The only difference is that in this 
case 
\begin{itemize}
\item The contribution from the deformation of the source (i.e. smoothing the nodes)
 adds an extra product factor
\begin{displaymath}
\prod_{v\in V^2(\Gamma)}\frac{1}{\left(\frac{t}{p_v}\right)+\psi_v},
\end{displaymath} 
A power $\psi_v^{m_v}$ of $\psi_v$ thus appears 
in the contribution of $\Gamma$ to $I(\n)$ for smoothing the node corresponding to 
the vertex $v$.
\item The deformation of the map contributes a factor of 
$e(\mathbb{E}^*\otimes \mathbf{1})$ over each one of the components 
in the fixed locus which are mapped to $\pp_0$ (i.e. over each $\Mbar^{v,\Gamma}$ 
with $v\in V^2(\Gamma)$).
The  Euler class of $\mathbb{E}^*\otimes \mathbf{1}$ over 
the product factor corresponding to a vertex  $v\in V^2(\Gamma)$ can contribute 
via a lambda-class $(-1)^{g_v-h_v}\lambda_{h_v}$ for some integer
$0\leq h_v\leq g_v$.  Since $\lambda_{g_v}^2=0$ over $\Mbar^{v,\Gamma}$ we may further
assume that $h_v<g_v$.
\end{itemize}

 The terms corresponding to $\Gamma$ in $I(\n)$ are thus 
 indexed by the set $c(\Gamma)$ of the multi-sets $c=(h_v,m_v)_{v\in V^2(\Gamma)}$ 
with such that 
\begin{itemize}
\item $0\leq h_v<g_v$.
\item $0\leq m_v\leq 2g_v-h_v-2-\sum_{i\in I_v}n_{i}$.
\end{itemize}

Let $\Mbar^\Gamma$ denote the 
fixed locus corresponding to $\Gamma$ and 
$\pi_v:\Mbar^\Gamma\ra \Mbar^{v,\Gamma}$ denote the projection map
over the product factor corresponding to the vertex $v$.
Denote the projection from $\Mbar^\Gamma$ to $\Mbar^{\infty,\Gamma}$ by 
$\pi_\infty$. The restriction of $\epsilon$ to $\Mbar^\Gamma$ gives a map 
from $\Mbar^\Gamma$ to $\Mbar_{g,n+k}$.
The  contribution  corresponding to  
$\Gamma$ and $c=(h_v,m_v)_{v\in V^2(\Gamma)}\in c(\Gamma)$ takes the form
\begin{displaymath}
I(\n,\Gamma,c)=B(\n,\Gamma,c)\
\epsilon_*\left(\Ps(\n,\Gamma,c)\cap \left[\Mbar_{\Gamma}\right]^{vir}\right)
\end{displaymath}
where
\begin{displaymath}
\begin{split}
&\Ps(\n,\Gamma,c):=\left(\prod_{v\in V^2(\Gamma)}\pi_v^*\left(\lambda_{g_v}
\lambda_{h_v}\psi_v^{m_v}\Ps_{v,\Gamma}(\n)\right)\right)
\pi_\infty^*\left(\prod_{v\in V_0(\Gamma)}
\frac{1}{1-p_v\psi_v}\right)_{e(\n,\Gamma,c)}\\
&e(\n,\Gamma,c)=e-\sum_{v\in V^2(\Gamma)}\Big(|I_v|+2g_v-2-h_v-m_v-
\deg(\Ps_{v,\Gamma}(\n))\Big)
\end{split}
\end{displaymath}
and the coefficient $B(\n,\Gamma,c)$ is defined by 
\begin{displaymath}
\frac{1}{\Aut(\p_\Gamma)}
\left(\prod_{v\in V^0(\Gamma)}\frac{p_v^{p_v-1}}{p_v!}\right)\left(
\prod_{v\in V^1(\Gamma)}\frac{p_v^{p_v-m_{v}}}{p_v!}\right)\left(
\prod_{v\in V^2(\Gamma)}\frac{p_v^{p_v+m_v+1}}{p_v!}\right)
\end{displaymath}
Here, for a tautological class $\psi$, $(\psi)_l$ denotes the  part of 
$\psi$ corresponding to the cycle dimension  $l$.\\

For $\Gamma$ as above and $v\in V^2(\Gamma)$, set $J_v=I_v\cap \{1,...,k\}$ and
define 
$$k(\Gamma)=\Big|\left\{n_v\ \big|\ v\in  V^1(\Gamma)\right\}\cap\{1,...,k\}
\Big|+\left|V^2(\Gamma)\right|.$$ 
Correspondingly, set
\begin{displaymath}
\begin{split}
&\Nbar^{v,\Gamma}=\Modbar_{g_v,1+|J_v|},\ \ \forall v\in V^2(\Gamma),\ \ \  \ \ 
\Nbar^{0,\Gamma}=\prod_{v\in V^2(\Gamma)}\Nbar^{v,\Gamma}\\
&\Nbar^{\infty,\Gamma}=\Mbar_{g_\infty,n+k(\Gamma)}
\ \ \ \ \ \ \ \text{and } \ \ \ \ \ \ \ \ 
\Nbar^\Gamma=\Nbar^{0,\Gamma}\times \Nbar^{\infty,\Gamma}.
\end{split}
\end{displaymath}
Let  $\pi^{\infty,\Gamma}:\Mbar^{\infty,\Gamma}\ra \Nbar^{\infty,\Gamma}$ denote
 the forgetful map which forgets the markings $n+k+1,...,n+m$.
 Denote the projection maps from $\Nbar^\Gamma$ to $\Nbar^{0,\Gamma}$ and 
 $\Nbar^{\infty,\Gamma}$ by $q_0$ and $q_\infty$, respectively, and define
 $q_1^*=q_\infty^*\circ \pi_*^{\infty,\Gamma}$. 
The map $\epsilon:\Mbar^\Gamma\ra \Mbar_{g,n+k}$ factors through an embedding 
of $\Nbar^\Gamma$ in $\Mbar_{g,n+k}$. Thus, there are Chow classes 
$\eta_i(\n,\Gamma,c)\in R_{i}(\Nbar^{0,\Gamma})$ with the property that
\begin{equation}\label{eq:reduction}
I(\n,\Gamma,c)=\jmath^\Gamma_*\left(\sum_{i=0}^e
q_0^*\Big(\eta_i(\n,\Gamma,c)\Big)
q_1^*\left(\prod_{v\in V_0(\Gamma)}
\frac{1}{1-p_v\psi_v}\right)_{e-i}\right),
\end{equation}
where $\jmath^{\Gamma}:\Nbar^\Gamma\ra \Mbar_{g,n+k}$ is the embedding 
of the quotient of $\Nbar^\Gamma$ by its automorphisms in $\Mbar_{g,n+k}$.\\
 
Associated with the sequence $\n$, we  obtain the following relation
in the tautological ring of $\Mbar_{g,n+m}$
\begin{equation}\label{eq:relation-general}
\sum_\Gamma \sum_{c\in c(\Gamma)}I(\n,\Gamma,c)=0
\end{equation}
where $I(\n,\Gamma,c)$ is given as in (\ref{eq:reduction}).\\

\subsection{The leading term}
Among all  $\Gamma$, we distinguish the decorated comb graphs 
with $V^2(\Gamma)=\emptyset$, and denote the set of such 
graphs by $\Jcal= \Jcal(g,n,m,d)$. For $\Gamma\in\Jcal$ the set $c(\Gamma)$ 
is trivial, and the corresponding coefficient is 
\begin{displaymath}
B(\n,\Gamma)=\frac{1}{\Aut(\p_\Gamma)}\left(\prod_{v\in V^0(\Gamma)}
\frac{p_v^{p_v-1}}{p_v!}\right)\left(
\prod_{v\in V^1(\Gamma)}\frac{p_v^{p_v-n_{v}}}{p_v!}\right)
\end{displaymath}
We may thus compute 
\begin{displaymath}
I(\n,\Gamma)=B(\n,\Gamma)\ \jmath^\Gamma_*\left(\prod_{v\in V_0(\Gamma)}
\frac{1}{1-p_v\psi_v}\right)_{e}
\end{displaymath}
We call the expression 
\begin{displaymath}
I^{lead}(\n)=\sum_{\Gamma\in\Jcal}I(\n,\Gamma)
\end{displaymath}
the leading term in the relation of Equation (\ref{eq:relation-general}).\\

Associated with a partition in $\PP(d+g-1-e-l;m)$ which is represented by the sequence 
$\n$ there are $m!/\Aut(\n)$  different sequences which may be constructed from $\n$. 
For $\sig\in S_m$, let  $\sig(\n)$ denote the sequence obtained by applying the 
permutation $\sig$ to $\n$. Let 
\begin{displaymath}
\begin{split}
&J(\n,\Gamma,c):=\frac{1}{m!}\sum_{\sig\in S_m}I(\sig(\n),\Gamma,c)\ \ \text{and}\\ 
&J(\n):=\frac{1}{m!}\sum_{\sig\in S_m}I(\sig(\n))=\sum_{\Gamma}\sum_{c\in c(\Gamma)}
J(\n,\Gamma,c).
\end{split}
\end{displaymath}
Note that $J(\n,\Gamma,c)$ and $J(\n)$ depend on the partition associated with $\n$, and 
not the sequence $\n$ itself. \\

Let $\Psi_e(d;g,n,k)$ denote the $\Q$-module generated by the subset
$$\left\{\langle \p\rangle^{g,n;k}_e\ \big|\ \p\in\PP(d)\setminus \PP(d,k-1)\right\}
\subset R_e(\Mbar_{g,n+k}),$$ 
and set $\Psi_e(d;g,n)=\Psi_e(d;g,n,0)$.  
The expressions 
$$J^{lead}(\n)=\frac{1}{m!}\sum_{\sig\in S_m}I^{lead}(\sig(n))$$
belong to $\Psi_e(d;g,n,k)$.\\

Suppose that $\Gamma$ is a decorated comb graph which contributes to $I^{lead}(\n)$.
Associated with $\Gamma$ is the partition $\p_\Gamma=(p_v)_{v\in V_0(\Gamma)}$
in $\PP(d)$. The partition associated with $\sig(\Gamma)$ is $\p_\Gamma$ as well.
Given $\p=(p_1,...,p_\ell)\in\PP(d)$, 
in order to determine the decorated comb graph  
$\Gamma$ which contributes to $I^{lead}(\n)$ and satisfies $\p_\Gamma=\p$
we need to specify an injection $\phi:\{1,...,m\}\ra \{1,...,\ell\}$, which
 determines the $m$ vertices in $V_0(\Gamma)=\{1,...,\ell\}$ which carry 
 the markings $\{n+1,...,n+m\}$. Let 
 $$J(\n,\p)=\sum_{\Gamma\in \Jcal:\ \p_\Gamma=\p} J(\n,\Gamma).$$
 The above observation implies that
 \begin{displaymath}
 \begin{split}
 J(\n,\p)&=\prod_{i=1}^\ell \frac{p_i^{p_i-1}}{p_i!}\sum_{\phi}\prod_{j=1}^m
 p_{\phi(i)}^{1-n_i}\left(\pi_{g,n,k}^{\ell}\right)_*\left(
\frac{1}{\ell !}\sum_{\sig\in S_\ell}\prod_{i=1}^\ell\frac{1}{1-p_{\sig(i)}\psi_{n+i}} 
 \right)_e\\
 &=\prod_{i=1}^\ell \frac{p_i^{p_i-1}}{p_i!}\sum_{\phi}\prod_{j=1}^m
 p_{\phi(i)}^{1-n_i}\Big\langle\p\Big\rangle^{g,n;k}_e
 =C_\n^\p\Big\langle\p\Big\rangle^{g,n;k}_e.
 \end{split}
 \end{displaymath}

 Let us define  
\begin{displaymath}
\Phi_e(d;g,n,k):=\left\langle J^{lead}(\mm^{-})
\ |\ \mm\in \PP(d)\setminus \PP(d,\max\{e-g+1,k-1\})
\right\rangle_\Q.
\end{displaymath}

\begin{prop}\label{prop:compact-type}
The $\Q$-module $\Psi_e(d;g,n,k)/\Phi_e(d;g,n,k)$
 is generated by the classes in
\begin{displaymath}
G_e(d;g,n,k)=\left\{
\big\langle\p\big\rangle_e^{g,n;k}\ \big|\ \p\in\PP(d,e-g+1)\setminus \PP(d,k-1)
\right\}
\end{displaymath}
\end{prop}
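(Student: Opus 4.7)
The plan is to use the formula
\[
J^{lead}(\mm^-) = \sum_{\p \in \PP(d)} C_{\mm^-}^\p \, \langle \p \rangle^{g,n;k}_e
\]
(which follows from the computation of $J(\n,\p)$ completed just above) together with the definition of $\Phi_e(d;g,n,k)$ as the $\Q$-span of the $J^{lead}(\mm^-)$.  Setting $\mathcal{B} := \PP(d) \setminus \PP(d, \max\{e-g+1,k-1\})$ for the set of ``bad'' partitions, each of the relations above (for $\mm \in \mathcal{B}$), taken modulo $\Phi_e(d;g,n,k)$, is to be used to solve for the ``bad'' generators $\{\langle \p \rangle^{g,n;k}_e\}_{\p \in \mathcal{B}}$ in terms of the ``good'' generators in $G_e(d;g,n,k)$; any contributions from $\p \in \PP(d,k-1)$ appearing in the sum are either zero by convention (since the relevant forgetful map does not exist) or already lie in the span of $G_e$.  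Since both the relations and the unknowns are indexed by $\mathcal{B}$, this is a square linear system whose coefficient matrix is $M := (C_{\mm^-}^\p)_{\mm,\p \in \mathcal{B}}$.

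The heart of the argument is then to prove that $M$ is invertible over $\Q$.  The plan here is a triangularity argument: order $\mathcal{B}$ first by the length of the partition (longer first) and break ties using the refinement ordering of Section~\ref{sec:background}.  Using the formula for $C_\n^\p$ recalled in the introduction, one checks that the diagonal entry $C_{\mm^-}^{\mm}$ is always non-zero---the product $\prod_i m_i^{m_i-1}/m_i!$ is non-zero, and the canonical injection $\phi(i) = i$ alone contributes a non-vanishing summand to $\sum_\phi \prod_i p_{\phi(i)}^{1-n_i}$.  Moreover, entries with $|\p| < |\mm^-|$ vanish automatically because no injection $\phi : \{1,\dots,|\mm^-|\} \hookrightarrow \{1,\dots,|\p|\}$ exists, and an analysis of the remaining off-diagonal terms is to show that they land strictly below the diagonal in this ordering.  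Hence $M$ would be upper-triangular with non-vanishing diagonal, and thus invertible.

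The main obstacle is this triangularity verification.  With the chosen length/refinement ordering on $\mathcal{B}$, one must control the terms coming from the non-identity injections $\phi$ and from partitions $\p$ strictly coarser than $\mm$, ensuring that they all land strictly below the diagonal.  This is a delicate but direct combinatorial calculation with the explicit formula for $C_\n^\p$, paralleling and refining the triangularity argument behind Pandharipande's \cite{Rahul-k} analysis of the kappa ring of the moduli space $\Mod_{g,n}^c$ of curves of compact type, now adapted to the non-compact-type setting and to the presence of $k$ extra markings forgotten by the projection $\pi_{g,n,k}^m$.
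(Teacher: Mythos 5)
Your reduction of the proposition to a linear-algebra statement is the same as the paper's: modulo $\Phi_e(d;g,n,k)$ and the classes in $G_e(d;g,n,k)$, the relations $J^{lead}(\mm^-)=\sum_\p C_{\mm^-}^\p\langle\p\rangle^{g,n;k}_e$ give a square system indexed by $\mathcal{B}=\PP(d)\setminus\PP(d,\max\{e-g+1,k-1\})$, and everything hinges on the invertibility of the corresponding minor of $(C_{\mm^-}^\p)$. The gap is in how you propose to prove that invertibility. The matrix $(C_{\mm^-}^\p)$ is \emph{not} triangular in any ordering, length-refined or otherwise. The only automatic vanishing is $C_{\mm^-}^\p=0$ when $|\p|<|\mm^-|$, and $|\mm^-|$ counts only the parts of $\mm$ that are at least $2$, not $|\mm|$. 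So for any $\mm$ with many unit parts the row is essentially full: e.g.\ $\mm=(1,1,\dots,1)$ gives $\mm^-=\emptyset$ and $C_{\emptyset}^\p=\frac{(-1)^{\ell}}{\Aut(\p)}\prod_i p_i^{p_i-1}/p_i!\neq 0$ for every $\p$. Already for $d=3$ one checks that \emph{every} entry of the full matrix $M_0(3)$ is non-zero (for each row the inner sum $\sum_\phi\prod_i p_{\phi(i)}^{1-n_i}$ is a sum of positive terms), so no ordering can make any $2\times 2$ or larger submatrix triangular. Your "delicate but direct combinatorial calculation" therefore cannot be carried out; the off-diagonal terms do not die.

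What actually saves the argument, and what the paper invokes, is Proposition 9.1 of Pandharipande's paper \cite{Rahul-k}: there is an \emph{upper-triangular} (with respect to the refinement ordering) invertible matrix $Y$, built from generating-function identities involving $\sum_p p^{p-1}t^p/p!$, such that the product $M_0(d)\,Y$ is lower-triangular with non-zero diagonal entries. In other words, invertibility is obtained only after a triangular change of basis on the bracket classes, not from triangularity of $(C_{\mm^-}^\p)$ itself. One then restricts to the rows indexed by $\mathcal{B}$ and uses that $\mathcal{B}$ (partitions with more than $\max\{e-g+1,k-1\}$ parts) is closed under refinement, so the column operations encoded by $Y$ do not leak outside the relevant index set; the resulting minor is lower-triangular with non-zero diagonal, which gives exactly the statement that every $\langle\p\rangle^{g,n;k}_e$ with $\p\in\mathcal{B}$ lies in $\Phi_e(d;g,n,k)+\langle G_e(d;g,n,k)\rangle_\Q$. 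You should replace the triangularity claim for $M$ by this citation (or reprove the generating-function identity), since without it the proof does not close.
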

\begin{proof}
The coefficients $C_\n^\p$ form a matrix $C$ which is a minor of the 
matrix $M_0(d)$, studied in  Proposition 9.1 from \cite{Rahul-k}. 
It is shown that there is an upper triangular (with respect to the refinement ordering) 
square matrix $Y$ whose rows and columns are indexed by the partitions in $\PP(d)$ 
such that  $M_0(d)Y$ is lower-triangular (with respect to the same order) 
with non-zero diagonal entries. 
The minor $C$ of $M_0(d)$  corresponds to the partitions in 
$$\PP(d)\setminus \PP(d,\max\{e-g+1,k-1\}).$$
The matrix $CY$ is thus lower triangular.
Note that $CY$ is not a square matrix, but the number of its rows is greater than or 
equal to the number of its columns, and it makes sense for $CY$ to be lower triangular with 
non-zero entries on the diagonal.  In particular, the classes $\langle \p \rangle_e^{g,n;k}$
with $\p\in\PP(d)\setminus \PP(d,e-g+1)$ may be expressed in terms of the classes
in $\Phi_e(d;g,n,k)$, as well as the classes in $G_e(d;g,n,k)$. 
\end{proof}

\section{The asymptotic behaviour of the rank of the kappa ring}\label{sec:asymptotic}
Let $f_1,f_2:\Z^{\geq n}\ra \R$ be  real valued functions. If 
$$\limsup_{n\ra \infty}\frac{f_1(n)}{f_2(n)}\leq 0$$
we write $f_1(n)\in \ofrak(f_2(n))$.  
\begin{thm}\label{thm:asymptotic}
Fix the genus $g$, the integer $k\geq 0$, the cycle dimension $e$ and the difference 
$n-d\in\Z$. Then
\begin{displaymath}
\rank\left(\Psi_e(d;g,n,k)\right)-\frac{{n+e\choose e}{g+e\choose e}{k+e\choose e}}{(e+1)!}
\in\ofrak(n^e).
\end{displaymath}
\end{thm}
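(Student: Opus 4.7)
The upper bound is obtained by combining Proposition~\ref{prop:compact-type} with an induction on the genus $g$. The proposition yields
$$\rank\bigl(\Psi_e(d;g,n,k)\bigr)\ \leq\ |G_e(d;g,n,k)|\ +\ \rank\bigl(\Phi_e(d;g,n,k)\bigr),$$
so it suffices to estimate both summands separately.

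Standard partition asymptotics applied to the generating function $\prod_{j=1}^{e-g+1}(1-x^j)^{-1}$ show that $|G_e(d;g,n,k)|\leq |\PP(d,e-g+1)|$ grows polynomially of degree $e-g$ in $d=n+O(1)$, hence lies in $\ofrak(n^e)$ whenever $g\geq 1$. For the base case $g=0$, every comb graph $\Gamma$ indexing the localization sums of Section~\ref{sec:localization} has all its vertices of genus $0$, so $V^2(\Gamma)=\emptyset$ is forced, $J^{lead}(\mm^-)=J(\mm^-)=0$ by (\ref{eq:relation-general}), and therefore $\Phi_e(d;0,n,k)=0$. The same partition count gives $|G_e(d;0,n,k)|\sim {n+e\choose e}/(e+1)!+o(n^e)$, which is at most the target ${n+e\choose e}{k+e\choose e}/(e+1)!+o(n^e)$ (with equality when $k=0$), completing the base case.

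For the inductive step with $g\geq 1$, I would use the vanishing $J(\mm^-)=0$ to rewrite
$$J^{lead}(\mm^-)\ =\ -\sum_{\substack{\Gamma\\ V^2(\Gamma)\neq\emptyset}}\sum_{c\in c(\Gamma)} J(\mm^-,\Gamma,c),$$
and invoke (\ref{eq:reduction}) to realize each summand as a pushforward $\jmath^\Gamma_*$ from the boundary stratum $\Nbar^\Gamma=\Nbar^{0,\Gamma}\times \Mbar_{g_\infty,n+k(\Gamma)}$ with $g_\infty=g-\sum_{v\in V^2(\Gamma)}g_v<g$. The factor $\Nbar^{0,\Gamma}$ is a product of moduli spaces of dimensions independent of $n$, so its tautological ring has rank bounded uniformly in $n$. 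After Kunneth decomposition, the explicit form of the integrand in (\ref{eq:reduction}) places the class pushed forward from $\Mbar_{g_\infty,n+k(\Gamma)}$ in $\Psi_{e-i}(\,\cdot\,;g_\infty,n,k(\Gamma))$ for some codimension split $i\geq 0$, so the inductive hypothesis at $g_\infty<g$ bounds each such rank by a polynomial of degree $e-i$ in $n$. Only the $i=0$ terms contribute at leading order $n^e$.

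The main obstacle is combinatorial: one must verify that the sum of these leading contributions over graphs $\Gamma$ (accounting for the genus decomposition $g=g_\infty+\sum_{v\in V^2(\Gamma)}g_v$, the subset assignments $J_v\subseteq\{1,\ldots,k\}$, and the automorphism factors $|\Aut(\Gamma)|^{-1}$) reassembles via a hockey-stick identity for $\sum_{g_\infty=0}^{g-1}{g_\infty+e\choose e}$ together with Vandermonde-type identities for the $k(\Gamma)$-dependent factors into the clean coefficient ${g+e\choose e}{k+e\choose e}/(e!(e+1)!)$ appearing in the target asymptotic. The geometric reductions are comparatively straightforward; the substantive work lies in this final combinatorial matching.
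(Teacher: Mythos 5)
Your reduction scheme (Proposition~\ref{prop:compact-type} to dispose of $\Psi_e/\Phi_e$, the vanishing of $I(\n)$ in the non-equivariant limit to express $J^{lead}(\mm^-)$ as pushforwards from strata with $g_\infty<g$, the base case $g=0$ where $V^2(\Gamma)=\emptyset$ forces $\Phi_e(d;0,n,k)=0$) agrees with the first half of the paper's proof and is sound. The gap is in the final step. You propose to bound $\rank(\Phi_e(d;g,n,k))$ by summing, over all comb graphs $\Gamma$, the inductive bounds for $\rank(\Psi_e(d;g_\infty(\Gamma),n,k(\Gamma)))$, and you assert that this sum "reassembles" into ${g+e\choose e}{k+e\choose e}/(e+1)!$. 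It does not: the subspaces $\jmath^\Gamma_*(\cdots)$ for distinct $\Gamma$ overlap heavily, and the sum of their individual rank bounds strictly exceeds the target. Concretely, for $g=2$, $k=0$ the contributing strata are $(g_\infty;(g_v)_{v\in V^2})=(1;(1))$, $(0;(2))$, $(0;(1,1))$, with $k(\Gamma)=1,1,2$; the inductive bounds contribute coefficients $(e+1)^2$, $(e+1)$ and ${e+2\choose 2}$ respectively, whose sum (e.g.\ $9$ for $e=1$) is far larger than the target coefficient ${e+2\choose 2}$ (which is $3$ for $e=1$). Note also that the hockey-stick identity you invoke gives $\sum_{j=0}^{g-1}{j+e\choose e}={g+e\choose e+1}$, not ${g+e\choose e}$, so even the shape of the proposed telescoping is off. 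Since you explicitly defer "the substantive work" to this combinatorial matching, and the matching as described is false, the proof is incomplete at its crucial point.

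The paper closes this gap by a genuinely different mechanism. It iterates the reduction all the way down to genus zero (representing each point class on $\Mbar^{v,\Gamma}$ by a maximally degenerate curve), and then observes that \emph{all} of the resulting boundary embeddings $\imath^G$, $G\in\Gcal$, factor through the single stratum $\imath^{g,k}:\Ccal_{g,k}\simeq\Mbar_{0,n+g+k}\to\Mbar_{g,n+k}$; this collapses the many overlapping subspaces into one, yielding (\ref{eq:asymptotic-reduction}). The rank of $\Psi_e(d;g,n,k)\cap\imath^{g,k}_*(R_e(\Ccal_{g,k}))$ is then bounded not by counting generators but by counting \emph{evaluations}: a class $\beta=\imath^{g,k}_*(\ovl\alpha)$ may be taken $S_{n,g,k}$-symmetric, is determined (via Keel's theorem) by its integrals against codimension-$e$ combinatorial cycles, and—because $\beta$ lies in $\Psi_e$—those integrals depend only on the dimension vector $(d_0,\dots,d_e)$ of the $e+1$ components and on how the $g$ "genus" markings and the $k$ extra markings distribute among them. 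The number of such data is asymptotically ${n+e\choose e}{g+e\choose e}{k+e\choose e}/(e+1)!$, which is exactly where the product of three binomials comes from. To repair your argument you would need to replace the naive sum over $\Gamma$ by this (or some equivalent) device that accounts for the overlaps among the boundary pushforwards.
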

\begin{proof}
First of all,  for every $\n=\mm^{-}$ as in Section~\ref{sec:localization}, 
using (\ref{eq:reduction}) we have 
\begin{displaymath}
\begin{split}
J^{lead}(\n)&=-\sum_{\Gamma}\sum_{c\in c(\Gamma)}J(\n,\Gamma,c)\\
&=\frac{-1}{m!}\sum_{\substack{\sig\in S_m\\ 0\leq i\leq e\\ \Gamma,c}}
\jmath^\Gamma_*\left(
q_0^*\Big(\eta_i(\sig(\n),\Gamma,c)\Big)q_1^*\left(
\prod_{v\in V_0(\Gamma)}\frac{1}{1-p_v \psi_v}\right)_{e-i}
\cap \left[\Nbar^\Gamma\right]^{vir}\right)\\
&\in \bigoplu_{\Gamma}\bigoplu_{i=0}^{e}
\jmath^{\Gamma}_*\left(R_i(\Nbar^{0,\Gamma})\otimes \Psi_{e-i}
(d;g_\infty(\Gamma),n,k(\Gamma))
\right).
\end{split}
\end{displaymath}
Here, the summation notation is used for summing the subspaces of the tautological 
ring of $\Mbar_{g,n+k}$ and the sums are over all graphs $\Gamma$ 
with $g_\infty(\Gamma)<g$. \\

Proposition~\ref{prop:compact-type} implies that 
$$\rank\left(\Psi_e(d;g,n,k)\right)-\rank\left(\Phi_e(d;g,n,k)\right)\in \ofrak(n^e).$$
We now use induction on the genus $g$ to prove Theorem~\ref{thm:asymptotic}.
The induction hypothesis implies that the rank of 
$$R_i\left(\Mbar^{0,\Gamma}\right)\otimes \Psi_{e-i}(d;g_\infty(\Gamma),n,k(\Gamma))$$
belongs to $\ofrak(n^e)$ unless $i=0$, since the rank of the ring
$R_i(\Mbar^{0,\Gamma})$ does not grow with $n$. In other words, 
\begin{equation}\label{eq:rank-induction}
\rank\left(\frac{\Psi_e(d;g,n,k)}{ \Psi_e(d;g,n,k)\cap \bigoplu_{\Gamma}
\jmath^\Gamma_*\left( \Psi_e(d;g_\infty(\Gamma),n,k(\Gamma))
\right) }\right)\in \ofrak(n^e),
\end{equation}
where we abuse the notation by setting 
\begin{displaymath}
\jmath^\Gamma_*\left( \Psi_e(d;g_\infty(\Gamma),n,k(\Gamma))\right):=
\jmath^\Gamma_*\left( R_0(\Nbar^{0,\Gamma})\otimes
 \Psi_e(d;g_\infty(\Gamma),n,k(\Gamma))\right).
\end{displaymath}
In order to compute the asymptotic  rank of $\Psi_e(d;g,n,k)$, we thus 
restrict ourselves to its subspace which consists of the push-forwards from the strata 
corresponding to the comb graphs $\Gamma$, and over the corresponding cycle
$\Mbar^\Gamma=\Mbar^{0,\Gamma}\times \Mbar^{\infty,\Gamma}$ we  assume 
that the tautological class is the product of the point class from $\Mbar^{0,\Gamma}$ and 
a class in $\Psi_e(d;g_\infty(\Gamma),n,k(\Gamma))$.\\

\begin{figure}
\begin{center}
\includegraphics[scale=.60]{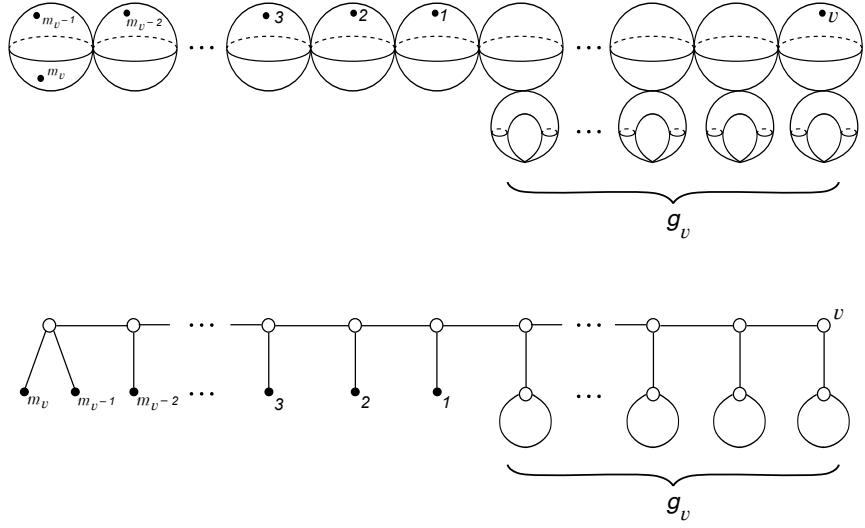} 
 \caption{\label{fig:nodal-curve}\
{The curve $C_v$ of genus $g_v$ with 
$m_v+1$ marked points, and the decorated graph representing it. 
The total number of vertices is $2g_v+2m_v-1$.
The marking $v$ is placed on the special vertex on the upper right corner.}}
\end{center}
\end{figure}

We represent the point class corresponding to the factor $\Mbar^{v,\Gamma}$ of
$\Mbar^{0,\Gamma}$ by a nodal pointed curve $C_v$ which is obtained as follows.
The curve $C_v$ corresponds to the weighted graph illustrated in 
Figure~\ref{fig:nodal-curve}. Applying the above inductive scheme to the 
subspaces $\Psi_{e}(d;g_\infty(\Gamma),n,k(\Gamma))$ we may reduce the genus $g_\infty$ 
to zero. \\

Let $\Gcal$ denote the set of all \SCC with a distinguished vertex 
$v_\infty$ such that $\theta(v_\infty)=(0,n+k_\infty)$ for some $0\leq k_\infty
=k_\infty(G)\leq k$, and for all $v\in V(G)\setminus\{v_\infty\}$, 
$\theta(v)=(0,k_v)$ with $k_v+d_v=3$ (where $d_v$ denotes the degree of 
the vertex $v$). Moreover, we assume that $k_\infty+\sum_v k_v=k$, that 
$G$ has $g$ self-edges and that by deleting these self-edges $G$ becomes a connected 
tree. For $G\in \Gcal$ we get an embedding
$$\imath^G:\Ccal_{G}\simeq \Mbar_{0,k_\infty+d_\infty}\ra \Mbar_{g,n+k}.$$
An inductive use of (\ref{eq:rank-induction}) implies that
\begin{equation}\label{eq:rank-result-of-induction}
\rank\left(\frac{\Psi_e(d;g,n,k)}{\Psi_e(d;g,n,k)\cap \bigoplu_{G\in \Gcal}
\imath^G_*\left( \Psi_e(d;0,n,k_\infty(G))\right) }\right)\in \ofrak(n^e).
\end{equation}
Finally, note that all embeddings $\imath^G$ factor through the embedding 
$$\imath^{g,k}:\Ccal_{g,k}=\Ccal_{G_{g,k}}\ra \Mbar_{g,n+k}$$
which corresponds to the \SCC $G_{g,k}$ with  vertices $v_\infty,1,2,...,g$, such that
  for every $i=1,...,g$ 
  $G_{g,k}$ contains an edge connecting the vertex $i$ to $v_\infty$ together with a
 self edge from $i$ to itself and $\theta(i)=(0,0)$. Moreover,
 $\theta(v_\infty)=(0,k)$. As a result of this observation, from 
 (\ref{eq:rank-result-of-induction}) we obtain
 \begin{equation}\label{eq:asymptotic-reduction}
 \rank\left(\Psi_e(d;g,n,k)/\left(\Psi_e(d;g,n,k)\cap
\imath^{g,k}_*\left( R_e(\Ccal_{g,k})\right) \right)\right)\in \ofrak(n^e).
 \end{equation}
 It is thus enough to prove that 
 \begin{displaymath}
  \rank\left(\Psi_e(d;g,n,k)\cap
\imath^{g,k}_*\left( R_e(\Ccal_{g,k})\right) \right)-\frac{{n+e\choose e}{g+k+e\choose e}}
{(e+1)!}\in \ofrak(n^e)
 \end{displaymath}
 The tautological ring of $\Ccal_{g,k}\simeq \Mbar_{0,n+g+k}$ is generated by 
 combinatorial cycles. 
Let $S_{n,g,k}$ denote the set of permutations in $S_{n+g+k}$ which preserve the 
sets $\{1,...,n\},\{n+1,...,n+g\}$ and $\{n+g+1,...,n+g+k\}$. If $D\subset 
\Mbar_{0,g+n+k}$ is a combinatorial cycle (i.e. one of the boundary strata in 
$\Mbar_{0,g+n+k}$), every  $\sig\in S_{n,g,k}$ acts on $D$ by permuting the markings 
on the curves in $D$ to give a corresponding combinatorial cycle $\sig(D)$.\\

 Suppose that the push-forward $\beta=\imath_*^{g,k}(\alpha)$ belongs to 
 $\Psi_e(d;g,n,k)$ for $\alpha=\sum_{i=1}^N D_i$ with $D_i\in R_e(\Ccal_{g,k})$.
 Since $\Ccal_{g,k}\simeq \Mbar_{0,n+g+k}$, we may set 
 $$\sig(\alpha)=\sum_{i=1}^N \sig(D_i)\ \ \ \forall\ \sig\in S_{n+g+k}.$$
Note that
\begin{displaymath}
\begin{split}
&\beta=\imath_*^{g,k}(\alpha)=\sig(\beta)=\imath_*^{g,k}(\sig(\alpha))\ \ \ \ 
\forall\ \ \sig\in S_{n,g,k}\\
\Rightarrow\ \ &\beta=\imath_*^{g,k}\left(\frac{1}{n!\times g!\times k!}
\sum_{\sig\in S_{n,g,k}}\sig(\alpha)\right)=:\imath_*^{g,k}(\overline{\alpha}).
\end{split}
\end{displaymath} 

The class $\ovl{\alpha}\in R_e(\Mbar_{0,n+g+k})$ is determined by its integrals over 
combinatorial cycles. The $\Q$-module generated by the combinatorial cycles in 
codimension $e$ is the same as $R^e=R_{n+g+k-3-e}(\Mbar_{0,n+g+k})$.
If $D$ is a combinatorial cycle of codimension $e$ in $R^e$ we have
$$\langle \ovl{\alpha}, \sig(D)\rangle=\langle \sig^{-1}(\ovl\alpha), D\rangle=\langle
\ovl\alpha, D\rangle\ \ \ \ \forall\ \ \sig\in S_{n,g,k}.$$
Integration against $\ovl\alpha$ thus gives a map
$$\int_{\ovl\alpha}:\frac{R_{n+g+k-3-e}(\Mbar_{0,n+g+k})}{S_{n,g,k}}\lra \Q$$
which determines $\ovl\alpha$. \\

Every combinatorial cycle $D$ as above determines a combinatorial cycle in 
$R_{n+3g+k-3-e}(\Mbar_{g,n+k})$ as follows. Suppose that $D$ is associated with 
a \SCC $G$ and that 
$$\epsilon_G: V(G)\ra \Z^{\geq 0}\times 2^{\{1,...,n+g+k\}}$$ 
is the corresponding weight function. For every vertex $v\in V(G)$, $\epsilon_G(v)=(0,I_v)$ 
with $I_v$ disjoint subsets of $\{1,...,n+g+k\}$ which give a partition of it.
Let $\pi( G)$ denote the \SCC with the same underlying graph $G$ and the weight function
defined by
\begin{displaymath}
\epsilon_{\pi(G)}(v):=\left(\left|I_v\cap \{n+1,...,n+g\}\right|,I_v\setminus
 \{n+1,...,n+g\}\right)\ \ \ \forall\ v\in V(G).
\end{displaymath}
Let $\pi(D)$ denote the combinatorial cycle associated with $\pi(G)$. If 
$\pi(D)=\pi(D')$ then $\imath_*^{g,k}(D)=\imath_*^{g,k}(D')$.
Moreover, the intersection of $\pi(D)$ with $\Ccal_{g,k}$ is transverse and
\begin{displaymath}
\pi(D)\cap \imath_*^{g,k}\left(\Mbar_{0,n+g+k}\right)=
\#\{D'\ |\ \pi(D')=\pi(D)\}\imath_*^{g,k}(D).
\end{displaymath}
From here we obtain
\begin{displaymath}
\langle \imath_*^{g,k}(\ovl\alpha),\pi(D)\rangle=
\#\left\{D'\ |\ \pi(D')=\pi(D)\right\}\langle \ovl\alpha, D\rangle.
\end{displaymath}
In other words, the map $\int_{\ovl\alpha}$ is determined by the evaluation
\begin{displaymath}
\begin{split}
&\int_\beta:\frac{R_{n+g+k-3-e}(\Mbar_{0,n+g+k})}{S_{n,g,k}}\lra \Q\\
&\int_\beta(D)=\frac{1}{\#\{D'\ |\ \pi(D')=\pi(D)\}}\langle \beta,\pi(D)\rangle.
\end{split}
\end{displaymath}
Since $\beta\in \Psi_e(d;g,n,k)$ the evaluation $\int_\beta(D)$ only depends on the 
modified weight function associated with $D$, and not the underlying graph $G$.
In other words, in order to determine $\int_\beta(D)$ one needs to specify 
\begin{itemize}
\item The dimensions $(d_0,d_1,...,d_e)$ of each one of the $e+1$ components 
of $D$, with the property that $\sum_{i}d_i=n+g+k-3-e$.
\item The number of markings from $\{n+1,...,n+g\}$ on each one of the 
$e+1$ components of $D$.
\item The number of markings from $\{n+g+1,...,n+g+k\}$ on each one of 
the $e+1$ components of $D$.
\end{itemize}
Asymptotically, the number of ways this can be done is equal to 
$$\frac{{n+e\choose e}{g+e\choose e}{k+e\choose e}}{(e+1)!}.$$
This completes the proof of the theorem.
\end{proof}
\begin{cor}\label{cor:asymptotic-kappa}
Fix the codimension $e$ and the genus $g$ and let the number $n$ of the markings grow large. 
Then the rank of $\kappa_e(\Mgnbar)$ as a module over $\Q$ is asymptotic to 
\begin{displaymath}
\frac{{n+e\choose e}{g+e\choose e}}{(e+1)!}.
\end{displaymath}
\end{cor}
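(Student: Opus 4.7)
The plan is to deduce the corollary by specializing Theorem~\ref{thm:asymptotic} to $k=0$ and then matching the upper bound it produces with a lower bound coming from the previous work \cite{ES-k1} on $\kappa_c^*(\Mgnbar)$.

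First I would identify the subspace $\Psi_e(d;g,n,0) \subset R_e(\Mbar_{g,n})$ with the full kappa ring $\kappa_e(\Mgnbar)$. By Definition~\ref{defn:psi-classes}, when $k=0$ the class $\langle\p\rangle_e^{g,n;0}$ is simply the codimension-$e$ part of $\langle\p\rangle_{g,n}$, and the excluded set $\PP(d,k-1)=\PP(d,-1)$ is empty, so $\Psi_e(d;g,n,0)$ is the $\Q$-span of $\{\langle\p\rangle^d : \p\in\PP(d)\}$ with $d=3g-3+n-e$. By Lemma~\ref{lem:basis}, these bracket classes are related to the monomial basis $\{\kappa(\p)\}$ by an invertible linear transformation, so they generate $\kappa^d(\Mgnbar)=\kappa_e(\Mgnbar)$. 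Hence $\Psi_e(d;g,n,0)=\kappa_e(\Mgnbar)$.

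Next I would apply Theorem~\ref{thm:asymptotic} with $k=0$. Since $\binom{0+e}{e}=1$, the theorem yields
\begin{displaymath}
\rank\bigl(\kappa_e(\Mgnbar)\bigr) - \frac{\binom{n+e}{e}\binom{g+e}{e}}{(e+1)!} \in \ofrak(n^e),
\end{displaymath}
which by definition of $\ofrak$ is only an upper bound, namely
$\limsup_{n\to\infty} \rank(\kappa_e(\Mgnbar))/\bigl(\binom{n+e}{e}\binom{g+e}{e}/(e+1)!\bigr) \leq 1$.

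For the matching lower bound I would appeal to the short exact sequence
\begin{displaymath}
0\lra \kappa_0^*(\Mgnbar)\lra \kappa^*(\Mgnbar)\xra{\pi_{g,n}}\kappa_c^*(\Mgnbar)\lra 0
\end{displaymath}
stated in the introduction. Surjectivity of $\pi_{g,n}$ gives $\rank(\kappa_e(\Mgnbar))\geq \rank(\kappa_{c,e}(\Mgnbar))$, and the main result of \cite{ES-k1} asserts that the latter rank is asymptotic to $\binom{n+e}{e}\binom{g+e}{e}/(e+1)!$. Combining the two inequalities gives asymptotic equality, proving the corollary. The step that really carries the content is Theorem~\ref{thm:asymptotic}; once that is available, the corollary itself is essentially bookkeeping, and the only potential obstacle is being careful that $\ofrak(n^e)$ is one-sided so that the lower bound must be imported from elsewhere rather than read off directly from the theorem.
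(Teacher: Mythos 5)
Your proposal is correct and follows essentially the same route as the paper: the upper bound comes from Theorem~\ref{thm:asymptotic} with $k=0$ (after identifying $\Psi_e(d;g,n,0)$ with $\kappa_e(\Mgnbar)$ via Lemma~\ref{lem:basis}), and the matching lower bound is imported from \cite{ES-k1} via the surjection onto $\kappa_c^*(\Mgnbar)$, exactly as the paper's citation of Theorem 5 of \cite{ES-k1} encodes. Your explicit remark that $\ofrak$ is one-sided, so the lower bound cannot be read off from Theorem~\ref{thm:asymptotic} alone, is a correct and welcome clarification of the bookkeeping.
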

\begin{proof}
By Theorem~\ref{thm:asymptotic} we know that
\begin{displaymath}
\rank\left(\kappa_e(\Mgnbar)\right)-\frac{{n+e\choose e}{g+e\choose e}}{(e+1)!}
\in\ofrak(n^e).
\end{displaymath}
Theorem 5 from \cite{ES-k1} implies that 
\begin{displaymath}
\frac{{n+e\choose e}{g+e\choose e}}{(e+1)!}-\rank\left(\kappa_e(\Mgnbar)\right)
\in\ofrak(n^e).
\end{displaymath}
These two observations complete the proof of the corollary.
\end{proof}

\section{The kappa ring of $\Modbar_{1,n}$}
\subsection{The kappa ring and the combinatorial kappa ring}
We would now like to focus on the study of $\kappa^d(\Mbar_{1,n})=\Psi_{n-d}(d;1,n)$.
As before, let
\begin{displaymath}
\Phi_{n-d}(d;1,n)=\left\langle
J^{lead}(\mm^-)\ |\ \mm\in \PP(d)\setminus\PP(d,n-d)\right\rangle.
\end{displaymath}
Other than the comb graphs in $\Jcal=\Jcal(1,n,m,d)$, the only possible comb graphs
are the comb graphs $\Gamma$ with a distinguished vertex $v_0\in V_0(\Gamma)$
with associated genus $g_0=1$, and with $g_v=0$ for all other vertices of $\Gamma$.
The image of the corresponding components of the fixed locus under the forgetful map 
$$\epsilon:\Mbar_{1,n+m}(\Pp(V),d)\ra \Mbar_{1,n}$$
coincides with the image of 
$$\imath=\imath^{1,0}:\Mbar_{0,n+1}\simeq [\mathrm{pt}]\times \Mbar_{0,n+1}
\subset \Mbar_{1,1}\times \Mbar_{0,n+1}
\lra \Mbar_{1,n}.$$
In particular, for every $\mm\in \PP(d)\setminus \PP(d,n-d)$ we have
\begin{displaymath}
J^{lead}(\mm^-)=\imath_*(\kappa(\mm))\ \ \ \ \kappa(\mm)\in\Psi_{n-d}(d;0,n,1).
\end{displaymath}

\begin{thm}\label{thm:relations}
The quotient map from $\kappa^*\left(\Modbar_{1,n}\right)$ to 
$\kappa^*_c\left(\Modbar_{1,n}\right)$ is an isomorphism.
\end{thm}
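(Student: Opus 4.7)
The plan is to deduce $\kappa_0^*(\Mbar_{1,n})=0$, which is equivalent to the statement that $\pi_{1,n}$ is an isomorphism. I would proceed in three steps.

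First, I would combine Remark~\ref{remark:rahuls-result} with the observation just above the theorem to place $\kappa_0^*(\Mbar_{1,n})$ inside the image of $\imath_*$. If $\kappa\in\kappa_0^d(\Mbar_{1,n})$ is combinatorially trivial, then in particular it pairs trivially with every cycle of the form $\imath^H_*[G]$ for $G$ a genus-zero \SCC on $n+2$ markings. Writing $\kappa=\langle\psi\rangle_{1,n}$ for some $\psi\in\Psi(d)$, Remark~\ref{remark:rahuls-result} gives $\psi=\sum_{\mm}a_{\mm}\,j(\mm^-)$, hence $\kappa=\sum_{\mm}a_{\mm}J^{lead}(\mm^-)$ with $\mm\in\PP(d)\setminus\PP(d,n-d)$. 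The identity $J^{lead}(\mm^-)=\imath_*(\kappa(\mm))$ stated just above the theorem --- which is the $g=1$ specialization of the localization analysis of Section~\ref{sec:localization}, the only comb graphs outside $\Jcal$ being those with a unique genus-one vertex in $V^2$ and whose contributions factor through $\imath$ --- then yields
\[
\kappa=\imath_*(\psi),\qquad\psi\in R^{d-2}\left(\Mbar_{0,n+1}\right).
\]

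Second, I would show that combinatorial triviality of $\kappa$ forces $\psi$ to pair trivially with every boundary stratum of $\Mbar_{0,n+1}$. Given a \SCC $G$ for $\Mbar_{0,n+1}$, let $\pi(G)$ be the \SCC for $\Mbar_{1,n}$ obtained by locating the vertex $v$ of $G$ which carries the marking $n+1$, deleting that marking, and attaching a self-loop at $v$ (so $g_v$ increases by one, the degree of $v$ increases by two, and the marking $n+1$ disappears). The embeddings $\imath_G$ and $\imath_{\pi(G)}$ fit into a transverse fibre square, and a projection-formula computation --- identical in shape to the one sketched in the introduction for $\imath^{g,n}$ --- produces
\[
\int_{[\pi(G)]}\imath_*(\psi)=C_G\int_{[G]}\psi
\]
with an explicit non-zero constant $C_G$, coming from $\int_{\Mbar_{1,1}}\lambda_1=\tfrac{1}{24}$ together with the automorphism factor relating $\Aut(\pi(G))$ and $\Aut(G)$. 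Combinatorial triviality of $\kappa$ makes the left-hand side vanish, so $\int_{[G]}\psi=0$ for every boundary stratum $[G]\subset\Mbar_{0,n+1}$. Keel's Theorem~\cite{Keel} then forces $\psi=0$, and therefore $\kappa=\imath_*(\psi)=0$.

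The main obstacle I anticipate lies in the verification of the identity $J^{lead}(\mm^-)=\imath_*(\kappa(\mm))$ invoked in the first step: it requires a careful accounting of the localization contributions of the non-$\Jcal$ comb graphs, tracking both the $S_m$-symmetrization inherent in the definition of $J(\n)$ and the combinatorial factors coming from $\Aut(\Gamma)$, together with the integration of $\psi$- and $\lambda$-classes over the genus-one vertex $v_0$. Once that identification is established, the intersection computation underlying the constant $C_G$ and the final appeal to Keel's Theorem are essentially routine.
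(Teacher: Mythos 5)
Your proposal follows essentially the same route as the paper: restrict combinatorial triviality to the genus-zero strata coming from $\Mbar_{0,n+2}$, invoke Keel plus Pandharipande's result to write $\kappa$ as a combination of the $J^{lead}(\mm^-)$, use the localization identity to get $\kappa=\imath_*(\psi)$ with $\psi\in R^{d-2}(\Mbar_{0,n+1})$, and then pair against the cycles $[\pi(G)]$ to force $\psi=0$ via Keel again. The only quibble is your description of $\pi(G)$ as "attaching a self-loop at $v$": the correct operation (and the one your constant $C_G$ implicitly assumes) is to delete the marking $n+1$ and increment the genus of $v$ (equivalently, attach a genus-one tail), since a self-loop would place $[\pi(G)]$ in $\delta_{irr}$ and the pairing with $\imath_*(\psi)$ would degenerate.
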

\begin{proof} 
It is enough to show that if a kappa class $\kappa \in\kappa_e(\Mgnbar)$ is combinatorially
trivial then it is zero. Suppose that $\kappa=\langle \phi\rangle_{1,n}$ for some 
$\phi\in\Psi(d)$ (we refer to Section~\ref{sec:background} for the definitions).
There is a homomorphism 
$$\jmath:\Mbar_{0,n+2}\lra \Mbar_{1,n},$$
which gives an embedding of $\Mbar_{0,n+2}/(\Z/2\Z)$ into $\Mbar_{1,n}$. 
The integral of $\kappa$ over all combinatorial
 cycles of the form $\jmath_*(D)$  is trivial,  since $\kappa$ is combinatorially trivial.
 However, this implies that
 $$\int_{D}\langle \phi\rangle_{0,n+2}=\int_{\jmath_*(D)}\langle \phi\rangle_{1,n}=0
 \ \ \ \ \ \forall\ \ D,$$
 i.e. $\langle \phi\rangle_{0,n+2}=0$.
 By Remark~\ref{remark:rahuls-result}
 $$\langle \phi\rangle_{0,n+2}=0\Rightarrow 
 \phi=\sum_{\mm\in \PP(d)\setminus \PP(d,n-d)} a_\mm
 \left(\sum_{\p\in\PP(d)}C_{\mm^-}^\p.\p\right),$$ 
for some  rational coefficients $a_\mm$, $\mm\in\PP(d)\setminus \PP(d,n-d)$,
 i.e. $\kappa$ is a linear combination of the kappa classes
 $J^{lead}(\mm^-)$. Thus, there is a tautological class $\psi\in R_{d-2}(\Mbar_{0,n+1})$
 such that $\kappa=\imath_*(\psi)$. In particular
 \begin{displaymath}
 \begin{split}
&\langle \psi, D\rangle=\frac{1}{\#\{D'\ |\ \pi(D)=\pi(D')\}}
\langle \kappa, \pi(D)\rangle=0
\ \ \ \forall \ D,\\
\Rightarrow\ \ &\psi=0\ \ \Rightarrow \ \ \kappa=0.
\end{split} 
 \end{displaymath}
 This completes the proof.
 \end{proof}

\subsection{The $\kappa$-trivial combinatorial cycles}
We call a formal linear combination 
$$a_1\q_1+a_2\q_2+...+a_k\q_k\ \ \ a_i\in\Q,\ \ \q_i\in\QQ(d;g,n)$$
a $\kappa$-trivial cycle if for every kappa class $\kappa\in\kappa^d(\Mgnbar)$ 
$$a_1\langle \kappa, \q_1\rangle +a_2\langle \kappa, \q_2\rangle+...+
a_k\langle \kappa, \q_k\rangle=0.$$
The space of $\kappa$-trivial cycles is a subspace of the vector space
$\langle\QQ(d;g,n)\rangle_\Q$ freely generated by the elements of $\QQ(d;g,n)$,
and we denote its rank by $r(d;g,n)$. 
The quotient $V(d;g,n)$ of $\langle \QQ(d;g,n)\rangle_\Q$ by the space of $\kappa$-trivial
cycles is a vector space isomorphic to $\kappa_c^d(\Mgnbar)$. 
Thus, the rank of the combinatorial kappa ring $\kring$ in degree $d$ 
may be computed as 
$$\rank\left(\kappa_c^d(\Mgnbar)\right)=\left |\QQ(d;g,n)\right |-r(d;g,n).$$

\begin{prop}\label{prop:relations}
The rank of $\kappa_c^d(\Mbar_{1,n})$ is at most
$|\PP_1(d,n-d)|$.
\end{prop}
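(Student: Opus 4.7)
My plan is to bound $\rank\kappa_c^d(\Mbar_{1,n})$, which equals $\rank\kappa^d(\Mbar_{1,n})$ by Theorem~\ref{thm:relations}, by iterating Proposition~\ref{prop:compact-type} and then exploiting relations coming from $\Mbar_{0,n+1}$. First, Proposition~\ref{prop:compact-type} applied with $g=1$, $e=n-d$, $k=0$ shows that $\kappa^d(\Mbar_{1,n})/\Phi_{n-d}(d;1,n)$ is generated by $\{\langle \p\rangle_{1,n} : \p \in \PP(d, n-d)\}$. The comb-graph analysis at the start of Section~4 identifies $\Phi_{n-d}(d;1,n)$ with $\imath_*\bigl(\Psi_{n-d}(d;0,n,1)\bigr)$, where $\imath: \Mbar_{0,n+1}\to \Mbar_{1,n}$ is the embedding coming from the unique comb graph with $g_\infty<1$. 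Consequently,
\begin{displaymath}
\rank\kappa^d(\Mbar_{1,n}) \;\leq\; |\PP(d,n-d)| + \rank\bigl(\imath_*\Psi_{n-d}(d;0,n,1)\bigr).
\end{displaymath}

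The key combinatorial step is to bound $\rank(\imath_*\Psi_{n-d}(d;0,n,1))$, modulo the span of $\{\langle \p\rangle_{1,n} : \p\in\PP(d,n-d)\}$, by $|\PP_1(d,n-d)|-|\PP(d,n-d)|$. A second application of Proposition~\ref{prop:compact-type} with $g=0$, $k=1$, $e=n-d$, using that $\Phi_{n-d}(d;0,n,1)=0$ (since $\Mbar_{0,n+1}$ is of compact type), shows $\Psi_{n-d}(d;0,n,1)$ is generated by $\{\langle \p\rangle_{0,n;1} : \p\in\PP(d,n-d+1)\}$. For each such $\p$ containing a part equal to $1$, I would apply the string equation to the corresponding factor $(1-\psi)^{-1}$ in the defining pushforward---combined with the projection formula for $\imath$ and the forgetful map $\Mbar_{0,n+2}\to\Mbar_{0,n+1}$---to reduce $\imath_*\langle \p\rangle_{0,n;1}$, modulo classes of the form $\langle \q\rangle_{1,n}$ with $\q\in\PP(d,n-d)$, to pushforwards associated with shorter partitions. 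Iterating this reduction and matching the surviving independent pushforwards with partitions in $\PP_1(d,n-d)\setminus\PP(d,n-d)$ yields the desired bound.

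The main obstacle is carrying out these combinatorial reductions cleanly. The string-equation identity requires careful tracking of how $\pi_*\bigl((1-\psi)^{-1}\alpha\bigr)$, for $\alpha$ pulled back from $\Mbar_{0,n+1}$, expands---via the string equation and the relation $\kappa_a=\pi_*(\psi^{a+1})$---into kappa classes on $\Mbar_{0,n+1}$ plus boundary corrections whose $\imath_*$-pushforwards land in the span of $\langle \q\rangle_{1,n}$ for $\q\in\PP(d,n-d)$. In regimes where the preliminary bound $|\PP(d,n-d)|+|\PP(d,n-d+1)|$ already matches $|\PP_1(d,n-d)|$ (which does hold for certain ranges of $d$ relative to $n-d$), no further reduction is required; in the remaining regimes, additional Keel-type boundary relations in $R^*(\Mbar_{0,n+1})$ must be invoked to establish the sharp bound. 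Consistency with the asymptotic result in Theorem~\ref{thm:asymptotic} and verification in small cases (e.g., $d\leq 3$) would serve as useful checks for the overall strategy.
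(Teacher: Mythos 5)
Your setup is sound, and your route is genuinely different from the paper's: you work on the generator side, applying Proposition~\ref{prop:compact-type} twice (once with $g=1$, $k=0$, once with $g=0$, $k=1$) to arrive at the preliminary generating set $\{\langle\p\rangle_{1,n}:\p\in\PP(d,n-d)\}\cup\{\imath_*\langle\p\rangle_{0,n;1}:\p\in\PP(d,n-d+1)\}$ of size $|\PP(d,n-d)|+|\PP(d,n-d+1)|$. The paper instead works dually: it identifies $\kappa_c^d(\Mbar_{1,n})$ with the quotient $V(d;1,n)$ of the span of the combinatorial cycles $\QQ(d;1,n)$ by the $\kappa$-trivial cycles, and then uses the explicit $\kappa$-trivial cycles of Theorem 3 of \cite{ES-k1} to exhibit a spanning set $A_1(d;1,n)\cup A_2(d;1,n)$ of $V(d;1,n)$ that is in bijection with $\PP_1(d,n-d)$. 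Either side of the duality could in principle work, so the issue is not the choice of strategy.

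The genuine gap is that your preliminary count really does exceed the target in general (e.g.\ $d=2$, $n=3$ gives $|\PP(2,1)|+|\PP(2,2)|=3$ while $|\PP_1(2,1)|=2$; $d=4$, $n=6$ gives $3+4=7$ versus $5$), so the entire content of the proposition sits in the reduction step that you only sketch, and the mechanism you propose for it cannot supply the needed relations. String-equation and Keel-type manipulations take place inside $R^*(\Mbar_{0,n+1})$ and hence can at best cut the second family down to $\rank\Psi_{n-d}(d;0,n,1)$; by Theorem~\ref{thm:asymptotic} with $g=0$, $k=1$ that rank is already asymptotically of the same order as $|\PP_1(d,n-d)|$ itself, so adding $|\PP(d,n-d)|$ on top still overshoots. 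What must actually be controlled is the overlap between the two families --- the fact that many classes $\imath_*\langle\p\rangle_{0,n;1}$ already lie in the span of the $\langle\q\rangle_{1,n}$ --- and this is governed by how genus-one kappa classes evaluate on the boundary stratum $\Mbar_{1,1}\times\Mbar_{0,n+1}$ versus on the purely genus-zero strata, i.e.\ by the $\kappa$-trivial cycle $\frac{1}{24}\q_0(n)-\sum_{i=1}^{n-1}{n-2\choose i-1}\q_i(n-i)$ of \cite{ES-k1}, whose coefficient $1/24=\int_{\Mbar_{1,1}}\lambda_1$ is invisible to any computation performed purely in genus zero. That input appears nowhere in your argument, and without it (or a substitute) the sharp bound $|\PP_1(d,n-d)|$ is not established; your closing appeal to ``additional Keel-type boundary relations'' in the remaining regimes is exactly the part that needs to be proved.
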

\begin{proof}
Theorem 3 from \cite{ES-k1} 
 implies that  $\frac{1}{24}\q_0(n)-\sum_{i=1}^{n-1}{n-2\choose i-1}\q_i(n-i)$ is  
$\kappa$-trivial. Consequently, for every partition $\n=(n_1,...,n_k)\in\PP(d)$
\begin{equation}\label{eq:genus-1-relation}
\frac{1}{24}\q_0(n,n_1,...,n_k)-\sum_{i=1}^{n-1}{n-2\choose i-1}\q_i(n-i,n_1,...,n_k)
\end{equation}
is $\kappa$-trivial in 
$\left\langle \QQ\left(n+d-k-1;1,n+d\right)\right\rangle_\Q$.
Thus, for every $\n\neq (1,1,...,1)$ in $\PP(n)$,
$\q_0(\n)\in V(n-|\n|;1,n)$ is equal to a linear combination of the cycles $\q_l(\mm)$ for 
$l\geq 1$ and $\mm\in\PP(n-l;|\n|)$. 
In other words, $V(d;1,n)$ is generated by $\q_l(\n)$ for $l\geq 1$ and 
$\n\in\PP(n-l;n-d)$.\\

For $\n=(a,b)\in\PP(n)$ with $a>b$, Theorem 3 from \cite{ES-k1} gives the following 
two equations in $V(n-2;1,n)$:
\begin{equation}\label{eq:g1-simpification}
\begin{split}
\frac{1}{24} \q_0(\n)&=\sum_{i=1}^{a-1}{a-2\choose i-1}\q_i(a-i,b)\\
&=\sum_{i=1}^{b-1}{b-2\choose i-1}\q_i(a,b-i).
\end{split}
\end{equation}
Thus, 
$$\q_{b-1}(a,1,n_1,...,n_k)\in V\left(a+b-2-k+\sum_in_i;1,a+b+\sum_in_i\right)$$  
may be expressed as a linear combination of 
\begin{displaymath}
\begin{split}
&\q_i(a-i,b,n_1,...,n_k),\ \   i=1,...,a-1\ \ \text{and}\\ 
&\q_{j}(a,b-j,n_1,...,n_k),\ 
j=1,...,b-2.
\end{split}
\end{displaymath}
This observation implies that $V(d;1,n)$ is generated by the following elements of
$\QQ(d;1,n)$ (with $l\geq 1$):
\begin{itemize}
\item $\q_l(n_1\leq ...\leq n_{n-d})$ with 
$\sum_{i=1}^{n-d}n_i=n-l$, and $n_1\geq 2$
\item
$\q_l(1\leq n_2\leq ...\leq n_{n-d})$ with $\sum_{i=2}^{n-d}n_i=n-l-1$
and $n_{n-d}\leq l+1$.
\end{itemize} 
Denote the above two sets of generators by $A_1(d;1,n)$ and $A_2(d;1,n)$ respectively, 
and set
 $$A(d;1,n)=A_1(d;1,n)\cup A_2(d;1,n).$$ 

Every element of $A_1(d;1,n)$  corresponds to 
the partition $$(n_1-1,...,n_{n-d}-1)\in \PP(d-l;n-d).$$ 
The size $|A_1(d;1,n)|$  is thus equal to 
$\sum_{l=1}^{2d-n}|\PP(d-l;n-d)|$.
Every partition in $A_2(d;1,n)$  gives the partition 
$$((n_2-1)\leq (n_3-1)\leq ...\leq (n_{n-d}-1)\leq l)\in \PP(d,n-d).$$
Thus, $V(d;1,n)$ is generated by a set of size
\begin{displaymath}
|\PP(d,n-d)|+\sum_{l=1}^{2d-n}|\PP(d-l;n-d)|.
\end{displaymath}
Sending the partition $\n=(n_1\leq ...\leq n_{n-d})\in\PP(d-l;n-d)$ 
to $$\n[l]:=(n_1,...,n_{n-d},1,...,1)\in\PP(d)$$ gives a bijection 
(extending the inclusion $\PP(d,n-d)\subset \PP_1(d,n-d)$)
$$\PP(d,n-d)\cup \coprod_{l=1}^{2d-n}\PP(d-l;n-d)\lra \PP_1(d,n-d).$$
 This completes the proof of Proposition~\ref{prop:relations}.
\end{proof}

\subsection{Independence of the generators.}

\begin{thm}\label{thm-combinatorial-kappa}
The rank of $\kappa_c^d(\Mbar_{1,n})$ is equal to $|\PP_1(d,n-d)|$.
\end{thm}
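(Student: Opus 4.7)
The plan is to complement the upper bound from Proposition~\ref{prop:relations} with a matching lower bound. By Proposition~\ref{prop:relations} combined with Theorem~\ref{thm:relations},
\[\rank\kappa^d(\Mbar_{1,n}) = \rank\kappa_c^d(\Mbar_{1,n}) \leq |\PP_1(d,n-d)|,\]
so it suffices to show that the spanning set $A(d;1,n) \subset \langle\QQ(d;1,n)\rangle_\Q$ of Proposition~\ref{prop:relations} is in fact a basis of $V(d;1,n) \simeq \kappa_c^d(\Mbar_{1,n})$.

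Using the bijection
\[\Phi:\PP(d,n-d)\cup\coprod_{l=1}^{2d-n}\PP(d-l;n-d)\longrightarrow \PP_1(d,n-d)\]
from the proof of Proposition~\ref{prop:relations}, write $\PP_1(d,n-d) = B_2\sqcup B_1$ with $B_2 = \PP(d,n-d)$, and for each $\p \in \PP_1(d,n-d)$ let $\q_\p \in A(d;1,n)$ be the corresponding cycle. I aim to show that the square pairing matrix
\[M_{\p,\p'} := \bigl\langle\psi(\p),\,\q_{\p'}\bigr\rangle_{1,n}, \qquad \p,\p' \in \PP_1(d,n-d),\]
with $\psi(\p)$ as in Definition~\ref{defn:psi-classes}, is non-singular, which will give the required matching lower bound.

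Ordering $\PP_1(d,n-d)$ so that $B_2$ (sorted by refinement, coarsest first) precedes $B_1$, $M$ takes block form
\[M = \begin{pmatrix} M(B_2,B_2) & 0 \\ \ast & M(B_1,B_1) \end{pmatrix}.\]
The upper right block vanishes because for $\p' \in B_1$ the multiset $\ppp(\q_{\p'})$ has $n-d+1$ positive parts, so any partition refining it has at least $n-d+1$ parts, while elements of $B_2$ have at most $n-d$ parts. For $\p \in B_2$ one has $\ppp(\q_\p) = \p$, so the criterion of Section~\ref{sec:background} that $\langle\psi(\p),\q\rangle$ vanishes unless $\p$ refines $\ppp(\q)$ makes $M(B_2,B_2)$ lower triangular in the refinement order, with diagonal entries equal to nonzero products of genus-one and genus-zero $\psi$-integrals on the factors $\Mbar_{1,l}$ and $\Mbar_{0,n_i+2}$.

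The main obstacle is non-singularity of $M(B_1,B_1)$: distinct $\p' \in B_1$ may share the same value of $\ppp(\q_{\p'})$, so the refinement criterion alone does not give triangularity within $B_1$. To handle this I would partition $B_1$ into level sets of the map $\p'\mapsto\ppp(\q_{\p'})$ and, within each level set, evaluate the pairings $\langle\psi(\p),\q_l(\n)\rangle$ explicitly as sums over distributions of the $|\p|$ forgotten markings over the vertices of the underlying stable weighted graph, each summand factoring as a product of genus-one $\psi$-integrals on $\Mbar_{1,l+k}$ (via Witten--Kontsevich) and top genus-zero $\psi$-integrals on $\Mbar_{0,n_i+2+k'}$. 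The different values of $l$ inside a common level set produce genuinely distinct combinatorial coefficients (as one may verify directly in the first nontrivial case $d=4, n=6$, where the level set $\{(2,1,1),(1^4)\}$ gives a $2\times 2$ subblock with nonzero determinant), so each subblock of $M(B_1,B_1)$ is non-singular and hence $\det M \neq 0$, which completes the proof.
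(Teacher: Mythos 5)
Your overall architecture is right and your treatment of the $A_2$ part matches the paper's: the reduction to showing $A(d;1,n)$ is independent in $V(d;1,n)$, the vanishing of the $(B_2,B_1)$ block because a partition with at most $n-d$ parts cannot refine $\ppp(\q)$ when $\q\in A_1(d;1,n)$ has $|\ppp(\q)|=n-d+1$, and the triangularity of the $A_2$-block via $\ppp:A_2(d;1,n)\hookrightarrow \PP(d,n-d)$ are all exactly the steps in the paper. The problem is the block $M(B_1,B_1)$, which is where the real content of the theorem lies, and there your argument is not a proof. You correctly observe that distinct columns $\q_l(\n),\q_{l'}(\n')\in A_1(d;1,n)$ can have the same image under $\ppp$, so refinement gives no triangularity inside a level set; but your resolution --- that ``the different values of $l$ inside a common level set produce genuinely distinct combinatorial coefficients'' so each subblock is non-singular --- is an assertion checked in a single $2\times 2$ instance, not an argument. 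The entries of these subblocks are sums over distributions of the forgotten markings, each summand a product of a genus-one Hodge/$\psi$-integral on $\Mbar_{1,l+k}$ with genus-zero integrals; the level sets grow with $d$, and non-singularity of such matrices of Hodge integrals is precisely the kind of statement that needs a structural reason, not a sample computation. (There is also a smaller unaddressed point: to even reduce $M(B_1,B_1)$ to its level-set subblocks you need an ordering of the level sets making the whole block triangular, which again requires an argument beyond the refinement criterion applied row by row.)

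The paper avoids this difficulty entirely rather than confronting it. It first stabilizes, using $\langle\psi(\p),\q_l(\n)\rangle_{1,n}=\langle\psi(\p),\q_l(\n[N])\rangle_{1,n+N}$ to pass to $n+N=2d$ markings, and then invokes the $\kappa$-trivial relations of Lemma~\ref{lem:k-trivial-2} (built inductively from the genus-one relation (\ref{eq:genus-1-relation}), i.e.\ Theorem 3 of \cite{ES-k1}) to rewrite each $\q_l(\n[l])$, modulo $\kappa$-trivial cycles, as an explicit combination of cycles $\q_0(\ov\mm\cup\n)$ all of whose vertices have genus zero. These are indexed by $\qqq(\mm)$ with $\mm\in\PP(d)\setminus\PP(d,n-d)$, and now \emph{two} triangularity arguments in the refinement order finish the job: the change-of-basis matrix $M(d;1,n)$ from $A_1^{2d-n}(d;1,n)$ to the $\qqq(\mm)$ has full rank, and the pairing matrix $N(d;1,g)=(\langle\psi(\p),\qqq(\p')\rangle)$ is triangular with non-zero diagonal. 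In other words, the genus-one vertex is eliminated \emph{before} pairing against $\psi$-classes, which is exactly what restores the triangularity your level sets lack. To repair your proof you would either need to supply a general non-singularity argument for your level-set blocks (which I do not see how to do directly), or adopt this genus-reduction step.
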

\begin{proof}
By Proposition~\ref{prop:relations}, it is enough to show that 
the elements of $A(d;1,n)$ are linearly independent in $V(d;1,n)$.\\

If $\n\in\PP(d,n-d)$,  the integral of $\psi(\n)\in\kappa^d(\Mbar_{1,n})$
against every $\q\in A_1(d;1,n)$  is zero, since the length of  $\ppp(\q)$ is $n-d+1$, 
while the length of $\n$ is at most $n-d$ (thus $\n$ does not refine $\ppp(\q)$). 
Meanwhile, the map $\ppp:\QQ(d;1,n)\ra \PP(d,n-d+1)$ gives  an injection
$$\ppp:A_2(d;1,n)\ra \PP(d,n-d).$$ 
 With respect to the refinement ordering on $\PP(d,n-d)$ the matrix 
$$\Big(\left\langle \psi\big(\ppp(\q)\big),\q'\right\rangle\Big)_{\q,\q'\in A_2(d;1,n)}$$
is  triangular with non-zero diagonal entries, and is thus full-rank. 
The above two observations reduce the 
proof of Theorem~\ref{thm-combinatorial-kappa} to showing that  the elements of 
$A_1(d;1,n)$ are linearly independent in $V(d;1,n)$.\\

For every $\n\in \PP(n-l;n-d)$, every $\p\in\PP(d)$, and every integer $N\geq 0$
\begin{displaymath}
\Big\langle \psi(\p),\q_l(\n)\Big\rangle_{1,n}
=\Big\langle \psi(\p),\q_l(\n[N])\Big\rangle_{1,n+N}.
\end{displaymath}

In order to prove the independence of the elements of $A_1(d;1,n)$, it is thus enough to 
prove the independence of the elements of $$A_1^N(d;1,n)\subset A_1(d;1,n+N)$$ 
consisting of $\q_l(\n[N])$ with $\q_l(\n)\in A_1(d;1,n)$.\\

For $\n=(n_1,...,n_k)\in\PP(n)$ and $\mm=(m_1,...,m_p)\in\PP(m)$ define
\begin{displaymath}
\begin{split}
&\bullet\ \ov\n:=(n_1+1,...,n_k+1,1,...,1)\in \PP(2n;n)\ \ \text{and}
\\
&\bullet\ \nn\cup\mm:=(n_1,...,n_k,m_1,...,m_p)\in\PP(m+n)
\end{split}
\end{displaymath}
Let $\tP(d)$ 
denote the set of all sequences $\n=(n_1,...,n_k)$ of positive integers such that 
$n_1+...+n_k=d$. There is a natural map from $\tP(d)$ to $\PP(d)$, which is 
implicitly used below to think of the elements of $\tP(d)$ as partitions.

\begin{lem}\label{lem:k-trivial-2}
For every positive integer $l$ and every $\n\in\PP(n)$ the  cycle
\begin{equation}\label{eq:reduction-to-genus-zero}
\q_l(\n[l])+\frac{1}{24}
\sum_{\mm=(m_1,...,m_k)\in\tP(l)}\left(
\frac{(-1)^{|\mm|} m_1}{d(\mm)}{d(\mm)\choose \mm} 
\right)\q_0(\ov\mm\cup \n)
\end{equation}
 is $\kappa$-trivial.
\end{lem}
\begin{proof}
We use induction on $l$. For $l=1$, Lemma~\ref{lem:k-trivial-2} follows directly from 
(\ref{eq:genus-1-relation}). Suppose now that the claim is proved for $1,2,...,l-1$. 
Using (\ref{eq:genus-1-relation}), for every $\n\in\PP(n)$ 
we make the following computation in $V(n+l;1,k+l)$:

\begin{displaymath}
\begin{split}
\q_{l}(\n[l])&=\frac{1}{24}\q_0(\{l+1\}\cup\n[l-1])-\sum_{i=1}^{l-1}{l-1\choose i}
\q_{l-i}(\{i+1\}\cup \n[l-1])\\
&=\frac{1}{24}\q_0\left(\ov{\{l\}}\cup\n\right)-\\
&\sum_{\substack{i=1,...,l-1\\\mm=(m_1,...,m_k)\in\tP(l-i)}}
\frac{m_1}{l-i}{l-1\choose i}{l-i\choose \mm}\frac{(-1)^{k}}{24}
\q_0\left(\ov{\left(\{i\}\cup\mm\right)}\cup\n\right)
\\
&=-\frac{1}{24}
\sum_{\mm=(m_1,...,m_k)\in\tP(l)}\left(
\frac{(-1)^{|\mm|} m_1}{d(\mm)}{d(\mm)\choose \mm} 
\right)\q_0(\ov\mm\cup \n)
\end{split}
\end{displaymath}
This completes the proof of Lemma~\ref{lem:k-trivial-2}.
\end{proof}
In particular, every element of $A_1^{2d-n}(d;1,n)\subset 
A(d;1,2d)$ is a linear combination (in $V(d;1,2d)$) 
of the cycles of the form $\q_0(\n)$ with $\n\in\PP(2d;d)$
having at least $n-d+1$ terms greater than or equal to $2$. 
Such $\n$'s are determined by $$\mm=\n^{-}\in\PP(d)-\PP(d,n-d).$$
Define $\qqq(\mm)=\q_0(\n)$.\\
 
Let us denote the matrix expressing the elements of 
$A_1^{2d-n}(d;1,n)$ in terms of $\qqq(\mm)$ with $\mm\in\PP(d)-\PP(d,n-d)$ 
by $M(d;1,n)$. The 
rows of $M(d;1,n)$ are thus indexed by the elements of $\PP(d)-\PP(d,n-d)$ 
and its columns are indexed by the elements of $A_1(d;1,n)$. 
In particular, 
$$ \q_l(\n)\in A_1(d;1,n)\ \ \Rightarrow\ \  \n[l]\in \PP(d)-\PP(d,n-d),$$
and the $(\q_l(\n),\n[l])$ component of $M(d;1,n)$ is equal to $\frac{(-1)^{l-1}(l-1)!}{24}$.
Moreover, if $\mm\in\PP(2d,d)$ corresponds to some non-zero entry of  $M(d;1,n)$
in the column corresponding to $\q_l(\n)$ then $\mm^-$ refines $\n$. 
In other words, the square 
sub-matrix of $M(d;1,n)$ corresponding to the rows indexed by $\n[l]$ with 
$\q_l(\n)\in A_1(d;1,n)$ is  triangular with non-zero elements on the 
diagonal (if we use the refinement ordering on the partitions).
Hence $M(d;1,n)$ is a matrix of full rank equal to $|A_1(d;1,n)|$.\\

In order to finish the proof, it is enough to show that the matrix
$$N(d;1,g)=\left(\langle \psi(\p),\qqq(\p')\rangle \right)_{\p,\p'\in \PP(d)-\PP(d,n-d)}$$
is invertible.
This is true since the matrix is upper triangular with non-zero diagonal
elements with respect to the refinement ordering over the partitions. This completes the 
proof of Theorem~\ref{thm-combinatorial-kappa}.
\end{proof}

 \section{The kappa ring of $\Mbar_{2,n}$}
 Let us now consider the case  $g=2$. Fix a partition 
 $$\n\in \PP(d,2d-2-n-l)\ \ \ \ l>0.$$ 
 If the contribution of $\Gamma$ to $J(\n)$ is non-trivial 
 $\Nbar^{0,\Gamma}$ is either trivial, or one of
 \begin{displaymath}
 \Mbar_{1,1},\ \ \ \Mbar_{2,1}\ \ \ \text{or}\ \ \ 
 \Mbar_{1,1}\times \Mbar_{1,1}.
 \end{displaymath}
 Correspondingly, the map $\jmath^{\Gamma}:\Nbar^\Gamma\ra \Mbar_{2,n}$ is one of the 
 four maps
 \begin{displaymath}
 \begin{split}
& \jmath^{0}=Id:\Mbar_{2,n}\ra \Mbar_{2,n}\ \ \ \ \ \ \ \ \ \ \ \ \ \  \ \ \ \ \ \ 
\jmath^{1}:\Mbar_{1,1}\times \Mbar_{1,1}\times \Mbar_{0,n+2}\ra \Mbar_{2,n}\\
&\jmath^{2,n}=\jmath^{2}:\Mbar_{2,1}\times \Mbar_{0,n+1}\ra \Mbar_{2,n}\ \ \ \ \ 
\jmath^{3}:\Mbar_{1,1}\times\Mbar_{1,n+1}\ra \Mbar_{2,n}.
 \end{split} 
 \end{displaymath}
 The comb graphs which correspond to $\jmath^0$ form the leading term $J^{lead}(\n)$
 as their contribution to $J(\n)$. Since a factor $\lambda_1$ appears over either of the 
 two $\Mbar_{1,1}$ components in the domain of $\jmath^1$, the contribution of the 
 comb graphs which correspond to $\jmath^1$ is a class of the form 
 $\imath^{2,n}_*(\kappa(\n))$ for some tautological class 
  $\kappa(\n)\in R^{d-4}(\Mbar_{0,n+2})$. With a similar reasoning, the comb graphs 
  corresponding to $\jmath^3$ contribute via a class of the form
  \begin{displaymath}
  \jmath^3_*\left(\pi_1^*(\lambda_1)\pi_2^*(\psi(\n))\right),\ \ \ \psi(\n)\in 
  \Psi_{n+3-d}(d;1,n,1),
  \end{displaymath}
  where (abusing the notation) $\pi_i$ denotes the projection map  from
   the domain of either of $\jmath^j$ over the $i$-th product factor, for $j=0,1,2,3$ 
   and $i=1,2,3$. \\
  
   Let $\delta$ denote the divisor
  $$\Mbar_{2,1}\setminus \Mod_{2,1}=\frac{[\Mbar_{1,3}]}{\Z/2\Z}
  +[\Mbar_{1,1}\times\Mbar_{1,2}]  =\delta_0+\delta_1.$$ 
By the argument of Section 8 from \cite{M}  over $\Mbar_{2,1}$ we get
 \begin{displaymath}
 \begin{split}
&\kappa_1=2\lambda_1+\frac{1}{2}\delta_1+\psi_1\ \ \ \ \text{and}\ \  \  
 \lambda_1=\frac{1}{12}(\kappa_1+\delta-\psi_1)\\ 
\Rightarrow \ \ &\lambda_2\kappa_1=\frac{4}{5}\lambda_2\lambda_1+\lambda_2\psi_1
\ \ \ \text{and}\ \ \ 
\lambda_2\delta_1=\frac{20}{3}\lambda_2\lambda_1.
 \end{split}
\end{displaymath}  
  Thus, the Chow factor over the component $\Mbar_{2,1}$ in the domain 
  of $\jmath^2$ is a linear combination of $\lambda_2$, $\lambda_2\psi_1$, $\lambda_2\lambda_1$  and the point class. 
  \begin{displaymath}
  \begin{split}
  \end{split}
  \end{displaymath}
  
  For every tautological class 
  $\psi\in R^*(\Mbar_{0,n+1})$ note that 
  \begin{displaymath}
  \jmath^2_*\left(\pi_1^*[\mathrm{pt}]\pi_2^*(\psi)\right)\in
   \Image\left(\imath^{2,n}_*\right).
  \end{displaymath}
Consequently, the total  contribution corresponding to the comb graphs $\Gamma$ with 
$\jmath^\Gamma=\jmath^2$ and $c\in c(\Gamma)$ corresponding to a multiple 
of the point class over $\Mbar_{2,1}$ is of the form $\imath^{2,n}_*(\kappa'(\n))$ for 
some $\kappa'(\n)\in R^{d-4}(\Mbar_{0,n+2})$.
 We thus obtain relations of the form
 \begin{displaymath}
\begin{split}
 J^{lead}(\n)=&\imath^{2,n}_*\big[\kappa(\n)+\kappa'(\n)\big]
 +\jmath^3_*\big[\pi_1^*(\lambda_1)\pi_2^*(\psi(\n))\big]\\
 &+\jmath^2_*\Big[\pi_1^*(\lambda_2)\pi_2^*(\beta(\n))+
 \pi_1^*(\lambda_2\lambda_1)\pi_2^*(\gamma_1(\n))+
 \pi_1^*(\lambda_2\psi_1)\pi_2^*(\gamma_2(\n))\Big]\\
\text{where}\ \ \ \ \  &\beta(\n)\in R^{d-3}(\Mbar_{0,n+1}) \ \ \ \text{and} \ \ \ 
\gamma_i(\n)\in R^{d-4}(\Mbar_{0,n+1}),\ \ i=1,2.\\
 \end{split}
\end{displaymath}

Let us now assume that $\kappa\in \kappa^d(\Mbar_{2,n})$ is combinatorially trivial.
For every \SCC $H$ with the property that the combinatorial cycle associated with 
$H$ is of dimension $d$ and lives in $\Mbar_{2,n}$ we get $\int_{[H]}\kappa=0$.
Applying the above assumption to the stable weighted graphs with the zero genus 
associated with all vertices we find that
$\kappa$ is a linear combination of the classes $J^{lead}(\n)$
by Remark~\ref{remark:rahuls-result}. The above observation implies that
there are classes 
\begin{displaymath}
\begin{split}
&\alpha\in R^{d-4}(\Mbar_{0,n+2}),\ \ \ \ \ \ \ \ \ 
\gamma_i\in R^{d-4}(\Mbar_{0,n+1})\ \ \ i=1,2\\
&\beta\in R^{d-3}(\Mbar_{0,n+1})\ \ \ \text{and}\ \ \ \psi\in \Psi_{n+3-d}(d;1,n,1)
\end{split}
\end{displaymath}
such that
\begin{displaymath}
\begin{split}
\kappa=&\jmath^3_*\big[\pi_1^*(\lambda_1)\pi_2^*(\psi)\big]
+\lambda_2\jmath^1_*\big[\pi_3^*(\alpha)\big]\\
&\ +\lambda_2 \jmath^2_*\Big[\pi_2^*(\beta)+\pi_1^*(\lambda_1)\pi_2^*(\gamma_1)
+\pi_1^*(\psi_1)\pi_2^*(\gamma_2)\Big]
\end{split}
\end{displaymath}
In getting rid of $\imath^{2,n}_*$ and replacing it with $\jmath^1_*$ we are using 
the fact that
$$\frac{1}{24^2}\imath^{2,n}_*(a)
=\jmath^1_*\big(\pi_1^*(\lambda_1)\pi_2^*(\lambda_1)\pi_3^*(a)\big)\ \ \ \forall\ 
a\in R^*(\Mbar_{0,n+2}).$$
 Moreover, since
 the restriction of $\lambda_2$ to  
\begin{displaymath}
 \Mbar_{1,1}\times\Mbar_{1,1}\times\Mbar_{0,n+2}\subset \Mbar_{2,n}
\end{displaymath} 
gives a factor of  $\lambda_1$ over either of the product factors $\Mbar_{1,1}$,
$$\jmath^1_*(\pi_1^*(\lambda_1)\pi_2^*(\lambda_1)\pi_3^*(a))=
\lambda_2\jmath^1_*(\pi_3^*(a)).$$

Consider a \SCC $G$ which determines a combinatorial cycle 
over $\Mbar_{1,n+1}$ with cycle dimension $d-2$ (i.e. of codimension 
$n+3-d$. Let $v$ denote the vertex of $G$ which carries the {\emph{special marking}}
 $n+1$. As in Section~\ref{sec:asymptotic} let
 $\pi(G)$ denote the stable weighted graph obtained from 
$G$ by removing  the special marking from $v$ and increasing the genus 
$g_v$ by $1$. Note that $\pi(G)$  determines a combinatorial cycle
of dimension $d$ in $\Mbar_{2,n}$. Let us assume that the genus associated with all
vertices of $G$ is zero. Since $\pi(G)$ contains a loop, 
the restriction of $\lambda_2$ to $[\pi(G)]$ is trivial. We thus find
\begin{displaymath}
\begin{split}
0=\int_{[\pi(G)]}\kappa
&=\int_{[\pi(G)]}\jmath^3_*\left(\pi_1^*(\lambda_1)\pi_2^*(\psi)\right)
=\frac{1}{24}\int_{[G]}\psi.
\end{split}
\end{displaymath} 
 In particular, the integral of $\psi\in\Psi_{n+3-d}(d;1,n,1)$ over all combinatorial 
 cycles consisting only of genus zero components is trivial. \\
 
 Note that $\psi$ may 
 be represented as the sum of an element $$\psi'\in\Phi_{n+3-d}(d;1,n,1)$$ and a linear 
 combination of the classes in $G_{n+3-d}(d;1,n,1)$. 
 Every linear combination of the 
 classes in $G_{n+3-d}(d;1,n,1)$ is  a linear combination of the classes 
 of the form $\psi_{n+1}^{p-1}\psi(\p)$ with $\p$ a partition in $\PP(d-2-p,n+2-d)$
 by Proposition~\ref{prop:compact-type}.
 Let $\ov{\PP}(d)$ denote the set of marked partitions $(p;\p)$ of $d$, i.e.
 the set of pairs $(p;\p)$ such that $p\leq d$ is a positive integer and  $\p\in\PP(d-p)$.
 Let $\ov{\PP}(d,k)$ denote the subset of $\ov{\PP}(d)$ which consists of the 
 marked partitions $(p;\p)$ with $|\p|<k$. 
 The above observation implies that associated with every 
 $(p;\p)\in\ov{\PP}(d-2,n+3-d)$ there is a  rational coefficient $A_{(p;\p)}$ such that
 \begin{displaymath}
 \psi=\psi'+\sum_{(p;\p)\in\ov\PP(d-2,n+3-d)}A_{(p;\p)}\psi_{n+1}^{p-1}\psi(\p).
 \end{displaymath}
 Since the integral of both $\psi$ and $\psi'$ over all combinatorial cycles which only consist 
 of genus zero components is zero, we conclude that for every such combinatorial 
 cycle $D$ we have 
 \begin{equation}\label{eq:int-against-comb-cycles}
 \sum_{(p;\p)\in\ov\PP(d-2,n+3-d)}A_{(p;\p)}\int_{D}\psi_{n+1}^{p-1}\psi(\p)=0.
 \end{equation}

For a combinatorial cycle $D$ as above  which consists only of genus zero components 
let $p_D-1$ denote the dimension of the component containing the $(n+1)$-th
marking. Let $\p_D$ denote the partition of $d-2-p_D$ determined by the dimensions 
of the rest of the components. Note that the marked partition $(p_D,\p_D)$ of $D$ 
consists of at most $n-(d-3)$ components.   
The integral of $\psi_{n+1}^{p-1}\psi(\p)$ against the combinatorial cycle $D$ only 
depends on $$(p_D;\p_D)\in \PP(d-2,n+3-d).$$ 
The $|\ov\PP(d-2,n+3-d)|\times 
|\ov\PP(d-2,n+3-d)|$ matrix containing all possible integrals 
$\langle \psi_{n+1}^{p-1}\psi(\p), (p_D;\p_D)\rangle$ is triangular with respect to the 
refinement ordering, the equations of (\ref{eq:int-against-comb-cycles}) implies that
$A_{(p;\p)}=0$ for all $(p;\p)\in\ov\PP(d-2,n+3-d)$. In particular, $\psi=\psi'$ belongs 
to $\Phi_{n+3-d}(d;1,n,1)$.\\  

 Since $\psi\in \Phi_{n+3-d}(d;1,n,1)$ it may be expressed in terms of the
  tautological classes pushed forward using
 $$\imath^1:\Mbar_{1,2}\times \Mbar_{0,n+1}\ra \Mbar_{1,n+1}\ \ \text{and}\ \ 
 \imath^2:\Mbar_{1,1}\times \Mbar_{0,n+2}\ra \Mbar_{1,n+1}.$$
 Repeating the argument which was employed at the beginning of this subsection we may 
 write
 \begin{displaymath}
 \begin{split}
& \psi= \imath^1_*\big[\pi_1^*(\lambda_1)\pi_2^*(\gamma_3)\big]+
\imath^2_*\big[\pi_1^*(\lambda_1)\pi_2^*(\alpha')\big]\\
\Rightarrow\ &
 \jmath^3_*\big[\pi_1^*(\lambda_1)\pi_2^*(\psi)\big]=
 \lambda_2\Big[\jmath^1_*\big(\pi_3^*(\alpha')\big)+
 \jmath^2_*\big(\pi_1^*(\delta)\pi_2^*(\gamma_3)\big)\Big]\\
 \Rightarrow\ &
 \kappa=\lambda_2\Big[\jmath^1_*\big(\pi_3^*(\alpha+\alpha')\big)
 + \jmath^2_*\big(\pi_2^*(\beta)\big)\Big]\\
&\ \ \ \ \ \ \ \ \ 
+ \lambda_2\jmath^2_*\Big[\pi_1^*(\lambda_1)\pi_2^*(\gamma_1)
+\pi_1^*(\psi_1)\pi_2^*(\gamma_2)+\pi_1^*(\delta_1)\pi_2^*(\gamma_3)\Big].
 \end{split}
 \end{displaymath}
 The second equality follows since the restriction of $\lambda_2$ to 
 either of 
\begin{displaymath}
\Mbar_{1,1}\times \Mbar_{1,2}\times \Mbar_{0,n+1}, \Mbar_{1,1}\times\Mbar_{1,1}
\Mbar_{0,n+2}\subset \Mbar_{2,n}
\end{displaymath} 
gives a factor of $1=\lambda_0$ over every product factor $\Mbar_{0,\star}$, and 
a factor of $\lambda_1$ over every product factor $\Mbar_{1,\star}$.
 
 The above considerations imply the following lemma.
 \begin{lem}\label{lem:genus-2-rep}
 If the integral of $\kappa\in\kappa^d(\Mbar_{2,n})$ over all combinatorial cycles 
 in $\Mbar_{2,n}$ with the sum of the genera of the components less than $2$
 is trivial then there are 
 tautological classes $a\in R^{d-4}(\Mbar_{0,n+2}), b\in R^{d-3}(\Mbar_{0,n+1})$
 and $c,c'\in R^{d-4}(\Mbar_{0,n+1})$ such that
 \begin{equation}\label{eq:genus-2-rep}
 \kappa=\imath^{2,n}_*(a)+\lambda_2 \jmath^{2,n}_*\Big(
 \pi_2^*(b)+\pi_1^*(\lambda_1)\pi_2^*(c)+\pi_1^*(\psi_1)\pi_2^*(c')
 \Big).
 \end{equation}
 \end{lem}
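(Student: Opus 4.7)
The plan is to specialize the inductive reduction scheme of Section~\ref{sec:asymptotic} to $g=2$ and combine it with the Mumford-type relations on $\Mbar_{2,1}$ and a triangularity argument for the residual $\psi$-contribution. First, the hypothesis in particular yields $\int_{[G]}\kappa=0$ for every \SCC $G$ with all vertices of genus zero, so by Remark~\ref{remark:rahuls-result}, $\kappa$ is a $\Q$-linear combination of the leading classes $J^{lead}(\m^-)$ with $\m\in\PP(d)\setminus \PP(d,n-d)$. Expanding each $J^{lead}(\m^-)$ via~(\ref{eq:relation-general}) as minus the sum of localization terms with $V^2(\Gamma)\neq \emptyset$, the only options for $\Nbar^{0,\Gamma}$ in genus two are $\Mbar_{1,1}$, $\Mbar_{1,1}\times\Mbar_{1,1}$, and $\Mbar_{2,1}$, corresponding to $\jmath^3$, $\jmath^1$, and $\jmath^2$. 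The Hodge factor $c_{top}(\R)$ furnishes a $\lambda_{g_v}$ on each genus-$g_v$ vertex; combined with $\jmath^{1*}\lambda_2=\pi_1^*\lambda_1\cdot\pi_2^*\lambda_1$ on $\Mbar_{1,1}\times\Mbar_{1,1}$ and the relations $\kappa_1=2\lambda_1+\tfrac{1}{2}\delta_1+\psi_1$ and $\lambda_2\kappa_1=\tfrac{4}{5}\lambda_2\lambda_1+\lambda_2\psi_1$ on $\Mbar_{2,1}$, the Chow factor in every $\jmath^2$-contribution reduces to a $\Q$-combination of the point class, $\lambda_2$, $\lambda_2\psi_1$, and $\lambda_2\lambda_1$. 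Absorbing point-class contributions into an $\imath^{2,n}_*$-term yields the intermediate presentation
\begin{displaymath}
\kappa=\imath^{2,n}_*(a_0)+\jmath^3_*\bigl[\pi_1^*(\lambda_1)\pi_2^*(\psi)\bigr]+\lambda_2\jmath^{2,n}_*\bigl[\pi_2^*(\beta)+\pi_1^*(\lambda_1)\pi_2^*(\gamma_1)+\pi_1^*(\psi_1)\pi_2^*(\gamma_2)\bigr]
\end{displaymath}
for some $\psi\in\Psi_{n+3-d}(d;1,n,1)$ and tautological classes $a_0,\beta,\gamma_1,\gamma_2$.

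The main obstacle is eliminating the middle $\jmath^3$-term. For any \SCC $G$ on $\Mbar_{1,n+1}$ with all-zero vertex genera I form the cycle $\pi(G)\subset\Mbar_{2,n}$ by promoting the genus of the vertex of $G$ carrying the special marking $n+1$ by one; this cycle has genus sum equal to one and so lies within the hypothesis. Because $\pi(G)$ contains a loop, $\lambda_2|_{[\pi(G)]}=0$, and $[\pi(G)]$ fails to meet $\imath^{2,n}(\Mbar_{0,n+2})$ generically, so $\int_{[\pi(G)]}\kappa$ collapses to $\tfrac{1}{24}\int_{[G]}\psi$. The hypothesis forces this integral to vanish for every such $G$. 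Writing $\psi=\psi'+\sum_{(p;\p)\in\ov{\PP}(d-2,n+3-d)}A_{(p;\p)}\psi_{n+1}^{p-1}\psi(\p)$ with $\psi'\in\Phi_{n+3-d}(d;1,n,1)$ via Proposition~\ref{prop:compact-type}, the matrix of integrals $\bigl\langle \psi_{n+1}^{p-1}\psi(\p),[G]\bigr\rangle$ indexed by marked partitions in $\ov{\PP}(d-2,n+3-d)$ is triangular in the refinement ordering with non-zero diagonal entries. This forces every coefficient $A_{(p;\p)}$ to vanish, hence $\psi\in \Phi_{n+3-d}(d;1,n,1)$.

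To finish, every element of $\Phi_{n+3-d}(d;1,n,1)$ is a $\Q$-linear combination of push-forwards along $\imath^1:\Mbar_{1,2}\times\Mbar_{0,n+1}\to\Mbar_{1,n+1}$ and $\imath^2:\Mbar_{1,1}\times\Mbar_{0,n+2}\to\Mbar_{1,n+1}$, each carrying a $\lambda_1$ factor on its genus-one component. Substituting into $\jmath^3_*[\pi_1^*(\lambda_1)\pi_2^*(\psi)]$ the two $\lambda_1$'s combine (via $\jmath^{1*}\lambda_2=\pi_1^*\lambda_1\cdot \pi_2^*\lambda_1$) into an overall $\lambda_2$, producing additional contributions of the form $\lambda_2\jmath^1_*(\pi_3^*(\alpha'))$ and $\lambda_2\jmath^{2,n}_*(\pi_1^*(\delta_1)\pi_2^*(\gamma_3))$. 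The first is absorbed into $\imath^{2,n}_*(a)$ via the identity $\tfrac{1}{24^2}\imath^{2,n}_*(a)=\jmath^1_*(\pi_1^*(\lambda_1)\pi_2^*(\lambda_1)\pi_3^*(a))$, and in the second the relation $\lambda_2\delta_1\in\Q\cdot\lambda_2\lambda_1$ on $\Mbar_{2,1}$ rewrites the $\delta_1$-factor as a multiple of $\lambda_2\jmath^{2,n}_*(\pi_1^*(\lambda_1)\pi_2^*(\gamma_3))$. Collecting all pieces yields the desired presentation~(\ref{eq:genus-2-rep}) with tautological classes $a,b,c,c'$.
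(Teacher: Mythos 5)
Your proposal follows essentially the same route as the paper: reduce to a linear combination of the classes $J^{lead}(\m^-)$, expand via localization into the four boundary maps $\jmath^0,\dots,\jmath^3$, use the Mumford relations on $\Mbar_{2,1}$ to normalize the $\jmath^2$-factor, kill the $\jmath^3$-term by integrating against the cycles $[\pi(G)]$ (where the loop forces $\lambda_2$ to vanish) together with the triangularity argument placing $\psi$ in $\Phi_{n+3-d}(d;1,n,1)$, and then recycle that term through $\imath^1,\imath^2$. The one imprecision is your justification that $\imath^{2,n}_*(a_0)$ does not contribute to $\int_{[\pi(G)]}\kappa$ "generically": the correct reason, which the paper uses and which you in fact state later, is that $\imath^{2,n}_*(a_0)=24^2\,\lambda_2\,\jmath^1_*(\pi_3^*(a_0))$ is a $\lambda_2$-multiple and hence dies on any stratum containing a non-separating node.
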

 \begin{proof}
 Set $a=\alpha+\alpha'$, $b=\beta$, $c=\gamma_ 1+\frac{20}{3}\gamma_3$
 and $c'=\gamma_2$. 
 \end{proof}
 
 \begin{thm}\label{thm:genus-2}
 The quotient map from $\kappa^d(\Mbar_{2,n})\ra \kappa_c^d(\Mbar_{2,n})$
 is an isomorphism.
 \end{thm}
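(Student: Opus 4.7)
Since combinatorial triviality of $\kappa$ in particular implies the vanishing of all its integrals against combinatorial cycles whose component genera sum to less than two, Lemma~\ref{lem:genus-2-rep} applies and yields the representation
\begin{displaymath}
\kappa = \imath^{2,n}_*(a) + \lambda_2\,\jmath^{2,n}_*\!\Big(\pi_2^*(b) + \pi_1^*(\lambda_1)\pi_2^*(c) + \pi_1^*(\psi_1)\pi_2^*(c')\Big),
\end{displaymath}
with tautological classes $a\in R^{d-4}(\Mbar_{0,n+2})$, $b\in R^{d-3}(\Mbar_{0,n+1})$ and $c,c'\in R^{d-4}(\Mbar_{0,n+1})$. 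The plan is to show that full combinatorial triviality forces $a=b=c=c'=0$, which completes the proof.

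To kill $a$, the key observation is that $\lambda_2$ restricts to zero on $\imath^{2,n}(\Mbar_{0,n+2})$: on a curve built from rational components glued along two self-nodes, the Hodge bundle is trivial of rank two (generated by the two canonical differentials at the nodes), so $c_2$ vanishes. Consequently, for every combinatorial cycle $D\subset\Mbar_{0,n+2}$, the cycle $\pi(D)\subset\Mbar_{2,n}$ sees only the first summand of the representation above, and combinatorial triviality of $\kappa$ gives $m_D \int_D a = 0$ for a nonzero multiplicity $m_D$. By Keel's theorem, $R^*(\Mbar_{0,n+2})$ is generated by combinatorial cycles, so $a=0$.

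To kill $b,c,c'$, after setting $a=0$ I rewrite $\kappa=\jmath^{2,n}_*(\pi_1^*(\lambda_2)\,\alpha)$ via the projection formula (using $(\jmath^{2,n})^*(\lambda_2)=\pi_1^*(\lambda_2)$, valid because the Hodge bundle splits over boundaries and has rank zero in genus zero). For a test cycle $[G'] = \jmath^{2,n}_*(Z\times D)$ with $Z\subset\Mbar_{2,1}$ and $D\subset\Mbar_{0,n+1}$ combinatorial cycles, the self-intersection formula gives
\begin{displaymath}
\int_{[G']}\kappa = -\int_{Z\times D}\pi_1^*(\lambda_2)\,\alpha\,\big(\pi_1^*(\psi_1)+\pi_2^*(\psi_{n+1})\big),
\end{displaymath}
which expands into six terms whose $\Mbar_{2,1}$-coefficients are among $\lambda_2$, $\lambda_2\psi_1$, $\lambda_2\psi_1^2$, $\lambda_2\lambda_1$, $\lambda_2\lambda_1\psi_1$. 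Varying $Z$ through combinatorial cycles of dimensions $2,3,4$ and using Mumford's relations $\lambda_1^2=2\lambda_2$ and $\lambda_2^2=0$ on $\Mbar_{2,1}$, combinatorial triviality of $\kappa$ combined with Keel's theorem on $\Mbar_{0,n+1}$ produces the vanishing of the products $b\,\psi_{n+1}$, $c\,\psi_{n+1}$ and $b+c'\,\psi_{n+1}$ in $R^*(\Mbar_{0,n+1})$. To promote these partial relations to $b=c=c'=0$, I would also test $\kappa$ against combinatorial cycles of $\Mbar_{2,n}$ lying outside the image of $\jmath^{2,n}$---most naturally against boundary strata of the form $\imath^{H_{1,1}}(\Mbar_{1,\ast}\times\Mbar_{1,\ast})$ on which $\lambda_2$ restricts to $\pi_1^*(\lambda_1)\cdot\pi_2^*(\lambda_1)$---which yields independent linear relations directly in $b,c,c'$ without the $\psi_{n+1}$ shift.

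The hard part is organizing the two families of test cycles (those inside the image of $\jmath^{2,n}$ and those coming from genus-$(1,1)$ strata) into a single system whose rank suffices to decouple $b$, $c$, and $c'$ simultaneously. This ultimately reduces to a concrete finite-dimensional rank condition on the $\lambda_2$-pairings inside $R^*(\Mbar_{2,1})$ and the analogous pairings inside $R^*(\Mbar_{1,\ast})$, which is in principle checkable using the known structure of these tautological rings together with standard $\lambda$-$\psi$ intersection numbers.
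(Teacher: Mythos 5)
Your reduction to Lemma~\ref{lem:genus-2-rep} is the right first step and matches the paper, but your argument that $a=0$ contains a genuine gap. You justify ``$[\pi(D)]$ sees only the first summand'' by the vanishing of $\lambda_2$ on $\imath^{2,n}(\Mbar_{0,n+2})$. That vanishing is true (the image lies in the locus of curves with a non-separating node), but it concerns the wrong locus: the test cycle $[\pi(D)]$ is \emph{not} contained in $\Image(\imath^{2,n})$ --- by construction it meets that image transversally, which is exactly what makes $\int_{[\pi(D)]}\imath^{2,n}_*(a)$ a nonzero multiple of $\int_D a$. In general $\lambda_2$ does not restrict to zero on $[\pi(D)]$: if the two special markings lie on different vertices of $D$, then $[\pi(D)]$ is a compact-type stratum with two genus-one vertices, on which $\lambda_2$ restricts to $\lambda_1\boxtimes\lambda_1\neq 0$. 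What must actually be shown is that $\lambda_2\,\jmath^{2,n}_*(\cdots)$ integrates to zero against $[\pi(D)]$, and this requires analyzing $[\pi(D)]\cap\Image(\jmath^{2,n})$: when the intersection is transverse it contains a full $\Mbar_{2,1}$ factor over which the integrand has degree at most $3<4=\dim\Mbar_{2,1}$, so the integral vanishes; and there is a genuinely non-transverse case (a vertex $v$ with $d(v)=1$ and $\epsilon(v)=(0,\{p,q\})$) which the paper eliminates by rewriting $[G]$ via Keel's relations before concluding $\int_{[G]}a=0$. Your proposal skips both steps, and the second one is not cosmetic.

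For $b,c,c'$ your instincts are reasonable --- testing against cycles inside $\Image(\jmath^{2,n})$ via the excess class $-(\psi_1+\psi_{n+1})$, and against strata where $\lambda_2$ factors through elliptic components; your Mumford relations $\lambda_1^2=2\lambda_2$ and $\lambda_2^2=0$ on $\Mbar_{2,1}$ are correct --- but the proof is left unfinished. You only assert relations of the shape $b\,\psi_{n+1}=0$, which do not imply $b=0$ since $\psi_{n+1}$ is a zero-divisor, and you explicitly defer the decoupling of $b,c,c'$ to an unverified ``rank condition.'' The paper closes this with three concrete families of test cycles: the non-transverse $\pi(G)$ cycles give $4c+9c'=0$ (via the Hodge integrals $\int_{\Mbar_{2,1}}\lambda_2\lambda_1\psi_1$ and $\int_{\Mbar_{2,1}}\lambda_2\psi_1^2$); the cycles $\tilde{G}$ obtained by attaching a single elliptic tail meet $\Image(\jmath^{2,n})$ transversally in $[\delta_1]\times[G]$ and give $\int_{[G]}c'=0$ because $\lambda_2\lambda_1$ vanishes on $\delta_1$; and the cycles $\overline{G}$ with two elliptic tails give $\int_{[G]}b=0$. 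As written, your argument establishes neither $a=0$ nor $b=c=c'=0$, so the theorem is not proved.
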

 \begin{proof}
 Every combinatorially trivial kappa class $\kappa$ has a  representation of the  
 form (\ref{eq:genus-2-rep}). Consider a \SCC $G$ which corresponds to a combinatorial
 cycle in $\Mbar_{0,n+2}$. Treating the last two markings in $\Mbar_{0,n+2}$ as the 
 special markings, $\pi(G)$ may be defined as a \SCC determining a combinatorial
 cycle in $\Mbar_{2,n}$. If the intersection of $\pi(G)$ with the image of $\jmath^{2,n}$
 is non-empty then the markings $p=n+1$ and $q=n+2$ lie over the same vertex
 of $G$. In this latter case, the intersection of $[\pi(G)]$ with the image of $\jmath^{2,n}$
 is transverse, unless the vertex of $G$ containing $p,q$ is a vertex $v$ with $d(v)=1$ and 
 $\epsilon(v)=(0,\{p,q\})$. However, if the intersection of $\jmath^{2,n}$ with $[\pi(G)]$
 is transverse then
\begin{displaymath}
\begin{split}
 \int_{[\pi(G)]}\lambda_2\jmath^{1,n}_*
 &\Big[\pi_2^*(b)+\pi_1^*(\lambda_1)\pi_2^*(c)+\pi_1^*(\psi_1)\pi_2^*(c')\Big]\\
&\ \ \ \ \ \ \ \ \  =\int_{[\Image(\jmath^{2,n})]\cap \pi(G)}
 \lambda_2\Big[\pi_2^*(b)+\pi_1^*(\lambda_1)\pi_2^*(c)+\pi_1^*(\psi_1)\pi_2^*(c')\Big].
\end{split}
\end{displaymath} 

 In this case, the intersection includes a factor $\Mbar_{2,1}$, and since the degree of 
 the  Chow class over this factor is at most $3$ the above integral is trivial.
 We conclude that  unless $G$ contains a vertex $v$ with $\epsilon(v)=(0,\{p,q\})$
 and $d(v)=1$ 
 \begin{displaymath}
 \int_{[G]}a=2\times 24^2\times \#\{D\ |\pi(D)=[\pi(G)]\} \times\int_{{\pi(G)}}\kappa=0.
\end{displaymath}

 Let us now assume that the \SCC $G$ has a vertex $v$ with $d(v)=1$ and 
 $\epsilon(v)=(0,\{p,q\})$. Let $w$ be the vertex of $G$ which 
 is connected to $v$  by an edge $e$. The vertex $v$ corresponds to a product 
 factor $\Mbar_{0,3}$ where the three markings are labelled by $\{p,q,e\}$.
The vertex $w$ corresponds to a factor $\Mbar_{0,k+1}$ where the markings  
 are denoted by $\{e,p_1,...,p_k\}$, and with $e$ denoting the marking 
 which corresponds to the edge $e$. For every subset $A\subset \{p_1,...,p_k,p,q\}=B$
 with $2\leq |A|\leq k$ let $G_A$ denote the \SCC obtained as follows. Delete the 
 edges of $G$ which are adjacent to either of $v$ and $w$, except for $e$, 
 to obtain a sub-graph $H$ of $G$ with $V(H)=V(G)$.  If $e'$ denotes a deleted edge 
 of $G$ which connects some vertex $u$ of $G$ to $w$ then $e'$ corresponds to 
 one of the markings $p_i\in\{p_1,...,p_k\}$. If $p_i\in A$ then add an edge to 
 $H$ which connects $u$ to $v$. Otherwise, add an edge to $H$ which connects
 $u$ to $w$. This gives a graph $G_A$. Let $\epsilon(w)=(0,I_w)$.
 Define the weight function over the vertices of  $G_A$ by 
 \begin{displaymath}
 \epsilon_A(u)=\begin{cases}
 \left(0,A\cap (I_w\cup\{p,q\})\right) \ \ \ &\text{if}\ u=v\\
\left(0, (B\setminus A)\cap (I_w\cup\{p,q\})\right) \ \ \ &\text{if}\ u=w\\
\epsilon (u)\ \ \ &\text{otherwise}
 \end{cases}.
 \end{displaymath}
 In particular, $G_{\{p,q\}}=G$ as stable weighted graphs.\\
 
 Keel's Theorem \cite{Keel} implies that
 \begin{displaymath}
 \sum_{\substack{A:\  p,q\in A\\ p_1,p_2\in B\setminus A}}[G_A]
 =\sum_{\substack{A:\  p,p_1\in A\\
 q,p_2\in B\setminus A}}[G_A]
 \end{displaymath}
 In particular, we obtain
 \begin{displaymath}
 \int_{[G]}a=\sum_{\substack{A:\ p,p_1\in A\\ q,p_2\in B\setminus A}}\int_{[G_A]}a
 -\sum_{\substack{A:\ p,q\in A\\ p_1,p_1\in B\setminus A\\ A\neq \{p,q\}}}\int_{[G_A]}a=0.
 \end{displaymath}
 The above discussion implies that the integral of $a$ over all combinatorial cycles is 
 trivial, and thus $a=0$.\\
 
 On the other hand, if $G$ is a \SCC containing a vertex $v$ with $d(v)=1$ and 
 $\epsilon(v)=(0,\{p.q\})$, the combinatorial cycle $[\pi(G)]$ is included in the 
 image of the map $\jmath^{2,n}$. Every such \SCC $G$ corresponds to another \SCC
$G^*$ obtained from $G$ by removing the vertex $v$ from $G$. If $w$ is the unique
vertex adjacent  to $v$ by the edge $e$, we define
\begin{displaymath}
\epsilon_{G^*}(u)=\begin{cases}
\left(0,I_w\cup\{e\}\right)\ \ \ &\text{if }u=w\\
\epsilon(u)\ \ \ &\text{otherwise} 
\end{cases}.
\end{displaymath}
The \SCC $G^*$ determines a combinatorial cycle in $\Mbar_{0,n+1}$, where $e$ 
corresponds to the last marking. Conversely, every combinatorial cycle in 
$\Mbar_{0,n+1}$ is of the form $G^*$. For every $H=G^*$ we obtain
 \begin{displaymath}
 \begin{split}
 0=\int_{[\pi(G)]}\kappa&=\frac{1}{2\times 24^2}\int_{[G]}a+
 \left(\int_{[\Mbar_{2,1}]}\lambda_2\lambda_1\psi_1\right)\int_{[H]}c
 + \left(\int_{[\Mbar_{2,1}]}\lambda_2\psi_1^2\right)\int_{[H]}c'\\
 &= \frac{1}{2\times 24^2}\int_{[H]}(c+\frac{9}{4}c').
 \end{split}
 \end{displaymath}
 Thus $4c+9c'=0$.
 Next, let $G$ be a \SCC which corresponds to a cycle of dimension $d-4$ in 
 $\Mbar_{0,n+1}$. Let $v$ denote the vertex of $G$ which contains the marking
 $n+1$, and let $\epsilon(v)=(0,I_v)$. Let $\tilde{G}$ denote the \SCC obtained from
 $G$ as follows. We add a vertex $w$ to $G$ and connect it to $v$ by a single edge.
 Then we set
 \begin{displaymath}
\epsilon_{\tilde{G}}(u)=\begin{cases}
(1,\emptyset)\ \ \ &\text{if }u=w\\
(1,I_v\setminus\{n+1\})\ \ \ &\text{if }u=v\\
\epsilon(u)\ \ \ &\text{otherwise} 
\end{cases}.
\end{displaymath}
The intersection of the combinatorial cycle $[\tilde{G}]$ with the image of $\jmath^{2,n}$
is always transverse, and they cut each other in 
$$[\delta_1]\times [G]\subset \Mbar_{2,1}\times \Mbar_{0,n+1}.$$
For every \SCC $G$ as above we thus find
\begin{displaymath}
\begin{split}
0=\int_{[\tilde G]}\kappa&=\left(\int_{[\delta_1]}\lambda_2\lambda_1\right)\int_{[G]}c
+\left(\int_{[\delta_1]}\lambda_2\psi_1\right)\int_{[G]}c'=\frac{1}{24^2}\int_{[G]}c'.
\end{split}
\end{displaymath}
Thus $c=c'=0$ and $\kappa=\lambda_2\jmath^{2,n}_*\big(\pi_2^*(b)\big)$.\\
 
 Finally, let $G$ be a \SCC which corresponds to a cycle of dimension $d-3$ in 
 $\Mbar_{0,n+1}$. Let $v$ denote the vertex of $G$ which contains the marking
 $n+1$, and let $\epsilon(v)=(0,I_v)$. Let $\overline{G}$ denote the \SCC obtained from
 $G$ as follows. The graph $\overline{G}$ is obtained from $G$ by adding a pair of 
 vertices $w_1$ and $w_2$ to $G$, which are connected by the edges $e_1$ and $e_2$
 to $v$. We define the corresponding weight function by
  \begin{displaymath}
\epsilon_{\overline{G}}(u)=\begin{cases}
(1,\emptyset)\ \ \ &\text{if }u=w_i,\ \ i=1,2\\
(1,I_v\setminus\{n+1\})\ \ \ &\text{if }u=v\\
\epsilon(u)\ \ \ &\text{otherwise} 
\end{cases}.
\end{displaymath}
The combinatorial cycle $[\overline{G}]$ cuts the image of $\jmath^{2,n}$ transversely in 
$$\left(\Mbar_{1,1}\times \Mbar_{1,1}\times \Mbar_{0,3}\right)\times [G]\subset
\Mbar_{2,1}\times \Mbar_{0,n+1}.$$
For every \SCC $G$ as above we thus obtain
\begin{displaymath}
\begin{split}
0=\int_{[\overline{G}]}\kappa&=\left(\int_{\Mbar_{1,1}}\lambda_1\right)^2\int_{[G]}b
=\frac{1}{24^2}\int_{[G]}b.
\end{split}
\end{displaymath}
Thus $b=0$ and $\kappa$ is  trivial.
\end{proof} 
 

\end{document}